\documentclass[letterpaper, 10 pt]{IEEEtran} 
\usepackage{cite}
\usepackage[tbtags]{amsmath}
\usepackage{amssymb,amsfonts}
\usepackage{amsthm}
\usepackage{algorithmic}
\usepackage{graphicx}
\usepackage{xcolor}
\usepackage[mathscr]{euscript}
\usepackage{mathrsfs}
\usepackage{tabularx}
\usepackage{multicol}
\usepackage{caption}
\usepackage[ruled,linesnumbered]{algorithm2e}
\usepackage[colorlinks = true, linkcolor = blue, citecolor = blue]{hyperref}
\usepackage{tcolorbox}
\usepackage{stackengine}
\usepackage{tikz}
\usepackage{cases}
\usepackage{lipsum}
\usepackage{enumitem}
\usepackage{color}
\usepackage{booktabs}

\setcounter{MaxMatrixCols}{20}
\DeclareMathOperator*{\argmax}{argmax}

\newcommand{\nonl}{\renewcommand{\nl}{\let\nl\oldnl}}

\definecolor{fuchsia}{HTML}{841184}
\definecolor{green}{HTML}{22CC22}
\definecolor{blue}{HTML}{0011EE}
\definecolor{red}{HTML}{FF111F}
\definecolor{gray}{HTML}{555555}



\newcommand{\ldef}{:=}

\newcommand{\Mc}[1]{\mathcal{#1}}
\newcommand{\real}{\ensuremath{\mathbb{R}}}

\newcommand{\natz}{{\mathbb{N}}_0}
\newcommand{\lbar}[1]{\stackunder[1.2pt]{$#1$}{\rule{.8ex}{.075ex}}}
\newcommand{\card}[1]{\lvert #1 \rvert}


\newcommand{\lanes}{\Mc{L}}
\newcommand{\tarr}{t^\mathcal{A}}
\newcommand{\tent}{t^\mathcal{E}}
\newcommand{\texit}{t^\mathcal{X}}
\newcommand{\tcoord}{t^\mathcal{C}}

\newcommand{\vset}{V}
\newcommand{\thor}{T_h}

\newcommand{\pretime}{T_{p}}
\newcommand{\coordhor}{T_c}
\newcommand{\coop}{c}
\newcommand{\vcoord}{\vset_{\coop}}
\newcommand{\vscheddone}{\vset_{s}}

\newcommand{\coordinterval}{\Mc{T}_{\coop}}

\newcommand{\follow}{q}
\newcommand{\vpbnd}{\Mc{V}}

\newcommand{\istar}{i^*}
\newcommand{\vfirst}{\Mc{F}}

\newcommand{\vfollow}{Q_i}
\newcommand{\wtime}{\tau_i}
\newcommand{\demand}{\Mc{D}}

\newcommand{\provphase}{\ensuremath{\texttt{prov\_phase}(i)}}
\newcommand{\coordphase}{\ensuremath{\texttt{coord\_phase}(\vcoord(k))}}


\makeatletter
\def\th@boldremark{\th@remark\thm@headfont{\bfseries}}
\makeatletter

\theoremstyle{boldremark}
\newtheorem{rem}{Remark}

\newcommand{\txt}[1]{\texttt{#1}}
\newcommand{\algcomm}[1]{\hfill \{\texttt{#1}\}}

\theoremstyle{plain}
\newtheorem{theorem}{Theorem}


\newcommand{\remend}{\relax\ifmmode\else\unskip\hfill\fi\hbox{$\bullet$}}

\graphicspath{{../figs/}{../figs/carlen4.5/}}
\begin{document}
\title{\LARGE{Data-Driven Distributed Intersection Management for Connected and Automated Vehicles}} \author{ \parbox{3 in}{\centering Darshan
    Gadginmath}
  \parbox{3 in}{ \centering Pavankumar Tallapragada }%
  \thanks{This work was partially supported by the Wipro IISc Research
    and Innovation Network.}  \thanks{Darshan Gadginmath is with
    the Department of Mechanical Engineering, University of California, Riverside
    {\tt\small\{dgadg001@ucr.edu\}}

     Pavankumar Tallapragada is with the Department of Electrical Engineering,
    Indian Institute of Science, Bangalore, India {\tt\small
      \{pavant@iisc.ac.in \}}} }

\maketitle

\begin{abstract}
  In this paper, we seek a scalable method for safe and efficient
  coordination of a continual stream of connected and automated
  vehicles at an intersection without signal lights. To handle a
  continual stream of vehicles, we propose trajectory computation in
  two phases - in the first phase, vehicles are constrained to not
  enter the intersection; and in the second phase multiple vehicles'
  trajectories are planned for coordinated use of the intersection.
  For computational scalability, we propose a data-driven method to
  obtain the intersection usage sequence through an online
  ``classification'' and obtain the vehicles' trajectories
  sequentially. We show that the proposed algorithm is provably safe
  and can be implemented in a distributed manner.  We compare the
  proposed algorithm against traditional methods of intersection
  management and against some existing literature through
  simulations. We also demonstrate through simulations that for the
  proposed algorithm, the computation time per vehicle remains
  constant over a wide range of traffic arrival rates.
\end{abstract}

\begin{IEEEkeywords}
  Intelligent transportation systems, autonomous intersection
  management, networked vehicles, distributed control, data-driven
  control, optimized and provably safe operation
\end{IEEEkeywords}

\section{Introduction}

The advent of \emph{connected and autonomated vehicles} (CAVs)
presents an opportunity to rethink the problem of intersection
management. Further, in recent years, relevance of intersection
management has grown in non-traditional domains such as robot traffic
control in warehouses. The onboard sensing, computation and
communication capabilities that are available on CAVs or robots allow
us to do real time coordination of the CAVs or robots to achieve a
more efficient and un-signalized intersection management. In this
work, we propose a computationally efficient distributed algorithm and
a framework for offline data-driven tuning for the management of an
isolated intersection in the context of CAVs or robots.

\subsubsection*{Literature review}

Un-signalized intersection management has been
studied extensively in recent years using a variety of tools. The
survey papers \cite{LC-EC_2015,ZZ-NM-EEL-2020} highlight the different methods and tools
employed for un-signalized or autonomous intersection management.
Some early works~\cite{KD-PS_2008, DF-etal_2011, MH-TCA-PS_2011,
  DC-SDB-PS_2013} focused on reservation and multi-agent simulation
based algorithms. Some disadvantages of such solutions is that they
are computationally demanding, centralized and do not easily provide
insights into the system. Since then a major trend in the literature
has been to design model based, provably safe algorithms. For example,
\cite{HK-DC-PRK:2011} uses reservations for scheduling intersection
usage times, and \cite{AC-DDV-2015, HA-DDV-2018} (see also the
references therein) propose a supervisory control method where a
supervisor takes over only when a collision is imminent.
Another major trend in the field of autonomous intersection management
is the use of an optimal control framework for determining the
schedules and trajectories of the vehicles. \cite{YB-HAR-2019}
formulates an optimal control problem to optimize the trajectory of
each vehicle individually. However, intersection management problem in
general requires coordination of multiple vehicles through a combined
optimization. Such a combined optimization problem is
combinatorial. Thus, the overall problem of trajectory optimization
becomes a mixed integer
program~\cite{MWL-DR_2017,FA-ALF_2016,SAF-AV-2018,AM-etal-2019}. In
particular, the complexity of such formulations scales exponentially
with the number of vehicles. This limits the practical utility of the
exhaustive mixed integer program formulations as intersection
management is a time and safety critical application.

Given the computational complexity of the problem, along with the
motivation of designing distributed algorithms, several works have
sought to decompose the overall autonomous intersection management
problem into simpler sub-problems. \cite{AIMM-etal-2019_access,
  AIMM-etal-2018} together propose a high level intersection access
management by treating the vehicles on different lanes as queues and
use the idea of platooning for local vehicular control. Given
intersection usage schedule for the vehicles, \cite{AAM-CGC-YJZ-2018,
  YZ-CGC-2019} (and the references therein) seek to solve the
trajectory optimization problem in a decentralized manner by relaxing
the rear-end collision avoidance constraints and guarantee existence
of initial conditions under which the safety constraints are
satisfied. Further, these works also propose a method to drive the
vehicles to good ``initial conditions'' under which safety can be
guaranteed subsequently. \cite{MK-etal_2020,GRC-etal_2017_sequential,
  CL-etal_2017, DM-SK-2020, RH-etal-2018, PT-JC-2019, BL-etal-2019}
also decompose the problem into scheduling and trajectory
optimization. 
\cite{PT-JC-2019} proposes an algorithm, in which a central
intersection manager groups vehicles into bubbles and schedules the
bubbles as a whole to use the intersection. Given the schedule, the
vehicles compute provably safe trajectories using a distributed
switched controller. 
\cite{YW-HC-FZ-2019} proposes to achieve coordination of vehicles by
optimizing a notion of joint rewards for the vehicles. For this, it
employs Q-learning where the joint actions are found using the
$\epsilon$-greedy approach. The learnt actions are stored in Q-tables
which can be used by the vehicles when the system is deployed. Since
the Q-tables are learnt from episodic data with a focus on minimizing
the intersection delay, the resulting trajectories can turn out to be
non-smooth. Although the vehicles can obtain near-optimal joint
actions, the paper does not provide safety guarantees.
  
Comfort of passengers and generation of smooth trajectories for
vehicles is another area of interest in autonomous intersection
management. \cite{MB-etal_2015} surveys driver comfort in autonomous
vehicles and highlights the inadequate research on passenger comfort
in path and motion planning of autonomous vehicles.
\cite{IAN-etal-2016_comfort} studies the problem of vehicles merging
into highways and uses a model predictive control architecture that
optimizes comfort by minimizing the squares of both acceleration and
jerk. \cite{PD-etal-2016} introduces a metric of comfort which is a
combination of vehicle-jitter, jerk and deviation from a desired
velocity.  \cite{YZ-CGC-2019} and \cite{RH-etal_2019_turns} focus on
vehicles turning at the intersection and impose a curvature-based
acceleration constraint to capture the comfort of the passengers.

\subsubsection*{Contributions}

In this paper, we propose a computationally scalable algorithm for
coordinating and optimizing the trajectories of a continual stream of
CAVs at and near an isolated, un-signalized autonomous
intersection. Most of the papers in the literature consider only the
problem of coordinating a fixed set of vehicles. Applying such
solutions to a continual stream of vehicles can result in inefficiency
or feasibility/safety itself may be violated in the long run. Further,
the optimal coordination of vehicles at an intersection is a mixed
integer problem, which scales badly with the number of vehicles and
lanes. This work is the only paper, to the best of our knowledge, that
addresses these issues systematically.

The key insight behind the proposed data-driven framework is that the
computation of the optimal intersection usage sequence can be thought
of as an online classification problem from a space of features that
encode the ``demand'' of a vehicle and the traffic following it to the
vehicle's precedence for using the intersection. With this insight, we
design a computationally very efficient and scalable data-driven
algorithm for the problem of autonomous intersection management. The
proposed framework also has the ability to incorporate both ``micro''
information about the individual vehicles' state as well as ``macro''
information such as traffic arrival rates. Such a combination again
has not been explored in the literature. This element of our framework
is particularly useful under high traffic arrival rates.

The second set of major contributions of this paper relates to how we
handle a continual stream of vehicles. Most papers in the area
essentially propose a one-shot algorithm with the implicit suggestion
that the one-shot algorithm should be run repeatedly. However, not
explicitly considering the continual stream of vehicles could in
general lead to loss of feasibility of safe trajectories.
In the proposed framework of this paper, we split the trajectory of
each vehicle into two phases (1) \emph{provisional phase} and (2)
\emph{coordinated phase}. Every vehicle operates in the provisional
phase as soon as it enters the system and it is restricted from
entering the intersection. Periodically, the vehicles in the
provisional phase obtain a trajectory for their coordinated phase and
start executing them. This framework thus offers a complete for a
continual stream of vehicles while ensuring safety, feasibility and
near optimality of the solutions.

Lastly, we evaluate the performance of our algorithm through an
extensive collection of simulations. In particular, we compare our
algorithm with that of an ``optimal'' algorithm, signalized
intersection management, first-in first-out based intersection
management as well as the algorithm proposed in~\cite{PT-JC-2019}. We
also demonstrate the computational efficiency of the proposed
algorithm through some coarse metrics. In particular, we observe from
the simulations that for the proposed algorithm the computation time
per vehicle essentially remains constant for a wide range of traffic
arrival rates. This is in contrast to an exhaustive optimization
algorithm for which the computation time per vehicle scales
exponentially with the traffic arrival rate.

\subsubsection*{Notation}

We use $\real$ and $\natz$ for the set of real and whole numbers,
respectively. For a discrete set $V$, we let $|V|$ be the cardinality
of the set $V$.

\section{Model and Problem Formulation}\label{sec:model}

\subsection{Model}

\subsubsection*{Geometry of the Region of Interest} 

We consider an isolated intersection with a set of lanes $\lanes$.
%
%
We refer to the lanes together with the intersection itself as the
\emph{region of interest}. Every lane $l \in \lanes$ has a unique
fixed path $P_l \subset \real^2$ associated with it. We denote by
$s_{l}$ the length of the portion of the path $P_l$, that lies within
the intersection. The length of all the paths leading to the
intersection is $d$.  Along the path on lane $l$, we let the positions
at the beginning of the region of interest, the beginning of the
intersection and the end of the intersection be $-d$, $0$ and $s_l$,
respectively. Figure~\ref{fig:intersection} presents the basic
geometry of an example region of interest, where the set of lanes is
$\lanes = \{1,2,3,\dots,12\}$ with 3 lanes (for going left, straight
and right) on each branch. Figure~\ref{fig:intersection} labels lanes
1, 8 and 12 and skips the rest for clarity. The translucent box shaded
in red represents the conflict region or the intersection, which is
the region of potential inter-lane collisions.
\begin{figure}[htb]
  \centering \includegraphics[width = 0.9\linewidth]{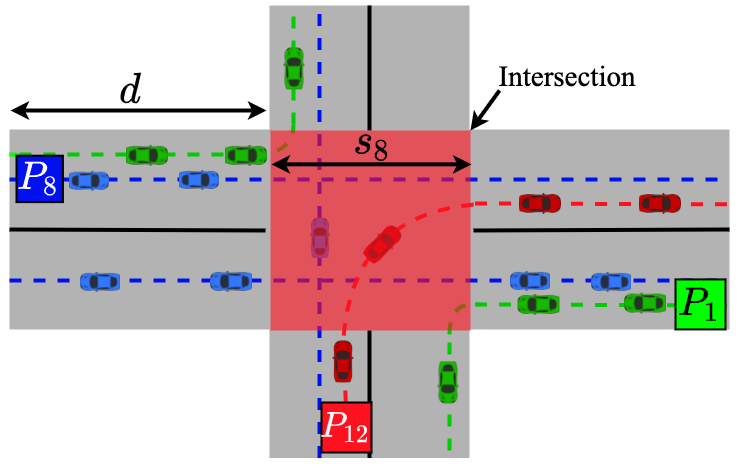}
  \caption{Region of interest and the geometry of the
    intersection. Here only 3 lanes with numbers 1, 8 and 12 have been
    labeled.  }
  \label{fig:intersection}
\end{figure}

As it is apparent from Figure~\ref{fig:intersection}, 
the paths of some lanes do not
intersect while others do. For each pair of lanes $l, m \in \lanes$,
we define a notion of \emph{compatibility} $c(l,m)$ as
\[
  c(l,m) \ldef \begin{cases}
    1 , & P_{l} \cap P_{m} = \emptyset \\
    0 , & P_{l} \cap P_{m} \neq \emptyset, \ P_l \neq P_m \\
    -1, & P_{l} = P_{m} .
  \end{cases}
\]
We say that a pair of lanes $l$ and $m$ are \emph{compatible} if
$c(l,m) = 1$ and \emph{incompatible} if $c(l,m) = 0$. In the sequel,
we require that vehicles on incompatible lanes not be in the
intersection at the same time. Note that our framework can handle
other configurations of incoming branches and intersections by
appropriately defining the compatibility pairs $c(l,m)$. We choose the
standard configuration, shown in Figure~\ref{fig:intersection}, purely
for ease of exposition. We assume that vehicles do not change lanes
within the region of interest.



We assume that all vehicles are CAVs - they can communicate with each
other and the infrastructure, and are automated. We also assume that
the vehicles do not change lanes within the region of interest.  The
CAVs can enter the region of interest on any lane in $\lanes$. We
denote the lane that vehicle $i$ traverses on by $l_i \in \lanes$ and
the vehicle's length by $L_i$. The state of the vehicle at time $t$ is
$(x_i(t), v_i(t))$, where $x_i$ and $v_i$ are the position of the
front bumper of the vehicle and the vehicle's velocity respectively on
the path $P_{l_i}$. The dynamics of the vehicle $i$ is
\begin{equation}
  \begin{aligned}
    &\dot{x}_i(t) = v_i(t) , \quad \dot{v}_i(t) =
    u_i(t), \label{eqn:DI}
  \end{aligned}
\end{equation}
where $u_i$ is the acceleration input to vehicle $i$. The vehicles are
in the region of interest for different time durations. In particular,
vehicle $i$ enters the region of interest at the \emph{arrival time},
$\tarr_i$, enters the intersection at the \emph{entry time},
$\tent_i$, and leaves the intersection at the \emph{exit time},
$\texit_i$. Thus, $x_i(\tarr_i) = -d$, $x_i(\tent_i) = 0$. and
$x_i(\texit_i) = s_{l_i}$.

\subsection{Problem}

The aim of the autonomous intersection management problem
is to compute safe trajectories for the CAVs while maximizing the
following objective function
\begin{equation}
  J \ldef \sum_{i \in \vset} \!\!
  \int \displaylimits_{\tarr_i}^{\tarr_i+\thor} \!\! \left[
    W_v v_i(t)  - \big(W_a  u_i^2(t) + W_j  \dot{u}_i^2(t)\big)
  \right] \mathrm{d}t , \label{eqn:cf1}
\end{equation}
where $\vset$ is the set of all vehicles that arrive in the region of
interest during a time interval of interest, $\dot{u}_i$ is the
\emph{jerk} of vehicle $i$ and $W_v$, $W_a$ and $W_j$ are non-negative
weights. We model the instantaneous discomfort of the passengers in
vehicle $i$ by the linear combination of the squares of acceleration
and jerk. This metric penalizes sporadic high-magnitude disturbances
caused by braking and acceleration manoeuvres performed by a
vehicle~\cite{MB-etal_2015}. Thus, each vehicle's contribution to the
objective function is a linear combination of the distance it travels
and the comfort of passengers in a time horizon $\thor$, starting from
the vehicle's arrival time $\tarr_i$.

\begin{rem}[Objective function]
  In the objective function~\eqref{eqn:cf1}, the contribution of
  vehicle $i$ is a linear combination of the distance traversed
  (integral of the velocity) and the comfort (negative of discomfort)
  experienced by the passengers of the vehicles. For comfort, we
  consider the acceleration and jerk only in the longitudinal
  direction along a vehicle's path. For vehicles turning left or
  right, we ignore the fact that forces also act in the lateral
  direction. However, the principles we illustrate in this paper could
  easily be extended to also consider `lateral' comfort. We seek to
  maximize the social (over all vehicles) objective function of
  traversal distance and comfort. Maximizing traversal distance over a
  fixed horizon in each term of~\eqref{eqn:cf1} is a proxy for
  minimizing traversal time for crossing the intersection. We choose
  this indirect metric because directly minimizing the traversal time
  results in a problem with a variable and unknown horizon for each
  vehicle. On the other hand, the fixed horizon
  formulation~\eqref{eqn:cf1} provides a computational advantage in
  the online construction of constraints for a stream of vehicles.
  \remend
\end{rem}

\subsubsection*{Constraints}

The first set of constraints on the CAVs are bounds on their
acceleration $u_i(t)$ and velocity $v_i(t)$, i.e.,
\begin{equation}
  u_i(t) \in [\lbar{u}, \bar{u}], \quad v_i(t) \in [\lbar{v},
  \bar{v}], \label{eqn:u-v-constr}
\end{equation}
for all $\forall i \in V$ over an appropriate time interval. 
We assume that $\lbar{u} < 0$, and $\lbar{v} = 0$.

The second set of constraints ensure safety between the vehicles.  Two
types of collisions can occur in the region of interest: (1)
rear-ended collision between successive vehicles on the same lane, and
(2) collision between vehicles on incompatible lanes, within the
intersection. To ensure in-lane safety we impose a safe-following
distance between any two successive vehicles travelling on the same
lane. Consider two vehicles $i$ and $j$ on the same lane ($l_i = l_j$)
such that $i$ is the vehicle immediately following $j$.  This
arrangement is formally denoted using the \emph{follower indicator
  function} as,
\begin{equation*}
  \follow(i,j) \ldef
  \begin{cases}
    1, \! & \text{if } l_i = l_j, \ x_i < x_j, \\
    &\ \nexists k \text{
      s.t. } l_k = l_i, \ x_i < x_k < x_j  \\
    0, \ & \text{otherwise} .
  \end{cases}
\end{equation*}

The minimum safe-following distance $D$ between vehicles $i$ and $j$,
when $q(i,j)=1$, is a function of the velocities of the two vehicles
and is given by~\cite{PT-JC:2018-tcns, PT-JC-2019},
\begin{equation}
  D(v_i,v_j) = L_j + r + \max \left\{ 0, \frac{ 1 }{ -2\lbar{u} } \left(
      v_i^2(t)-v_j^2(t) \right) \right\} . \label{eqn:rear-end distance}
\end{equation}
Here, $r$ is a robustness parameter which is a constant distance to account for measurement and
communication errors and delays. Then the \emph{rear-end safety
  constraint} is
\begin{align}
  & x_j(t) - x_i(t) \geq D(v_i,v_j), \ j \text{ s.t. }
   \follow(i,j) = 1  \label{eqn:rearend_safety}
\end{align}
for the time interval of interest. Note that the rear-end safety
constraint~\eqref{eqn:rearend_safety} is more robust to loss of
coordination, either due to breakdown in communication, control or due
to malicious vehicles, than rear-end non-collision
constraints~\cite{PT-JC:2018-tcns, PT-JC-2019}. To ensure safety
within the intersection, we additionally impose the constraint that
vehicles on incompatible lanes cannot be within the intersection
simultaneously. Thus, the \emph{intersection safety constraint} for a
pair of vehicles $i$ and $k$ is
\begin{align}
  \tent_i \geq \texit_k \ \ \mathrm{ \textbf{OR} } \ \
  \tent_k \geq \texit_i, \ \text{ if } c(l_i,l_k)
  = 0.  \label{eqn:int-safety}
\end{align}

Then, the proposed optimal control problem for intersection management
is as follows,
\begin{subequations}
  \begin{align}
    & \underset{u_i(.), \ i \in \vset}{\max} \  J  \\
    \mathrm{s.t.} \ %
    & \eqref{eqn:DI} , \eqref{eqn:u-v-constr}, \eqref{eqn:rearend_safety}\
      \forall t \in [\tarr_i,\tarr_i+\thor], \forall i \in \vset \\
    \ &\eqref{eqn:int-safety} \ \forall i,k \in \vset \
        \mathrm{s.t.} \ c(l_i,l_k) = 0 .
  \end{align}\label{eqn:main_Prob}
\end{subequations}


\begin{rem}[Challenges in solving \eqref{eqn:main_Prob} and problem
  statement]
  There are several challenges in solving
  Problem~\eqref{eqn:main_Prob}. First, vehicles arrive randomly in a
  stream into the system and the information about their arrival and
  state is revealed only incrementally. Thus,
  Problem~\eqref{eqn:main_Prob} cannot be ``solved'' in the usual
  sense. Hence, we seek an algorithm that satisfies the
  constraints in the problem and we utilize~\eqref{eqn:cf1} as a
  metric for evaluating the performance of an algorithm after it makes
  all the decisions. Further, although the exact arrival times of the
  vehicles are not known a priori, we allow for the knowledge of the
  statistical data such as the mean arrival rate of vehicles. We seek
  to leverage this information for more efficient traffic
  management. Second, Problem~\eqref{eqn:main_Prob} is a mix of large
  scale optimal control and combinatorial optimization. In particular,
  the number of optimal control sub-problems that
  constraints~\eqref{eqn:int-safety} generate scales exponentially
  with the number of vehicles and lanes. This is a serious issue since
  intersection management is a time and safety critical
  problem. Hence, we seek algorithms that are computationally scalable
  and yet provide near optimal performance.  \label{rem:complexity}
  \remend
\end{rem}

\section{Overview of the Algorithm} \label{sec:PropAlgo}

Considering the complexities and time-criticality associated with
Problem \eqref{eqn:main_Prob}, we propose a computationally efficient
algorithm to compute a sequence for intersection usage as well as the
trajectories for the vehicles. To overcome the randomness in the
arrival of traffic, and the challenges associated with incremental
revelation of information, we split the trajectory of each vehicle
into two phases: \emph{provisional} and \emph{coordinated}. The
provisional phase begins as soon as a vehicle arrives into the
region of interest. The vehicle seeks to maximize its objective under
the constraint of a safe approach towards the intersection. At a
prescribed time, the vehicle switches to its coordinated phase from
its provisional phase. The vehicles in their coordinated phase use the
intersection safely while aiming to optimize the overall objective.

In this section, we give an overview of the proposed algorithm to
solve Problem~\eqref{eqn:main_Prob}. For ease of exposition, we
initially assume the presence of a central \emph{intersection manager}
(IM) that has communication and computation capabilities with which it
carries out the coordination of the traffic. At the end of
Section~\ref{sec:coop_plan_methods}, we discuss how essentially all
the functions of the IM can be carried out in a distributed manner. We
present the overview of the algorithm in two parts: from the
perspectives of an arbitrary vehicle $i$ and the IM in
Algorithm~\ref{algo:vehicle_persp}, and in
Algorithm~\ref{algo:IM_persp}, respectively.

\subsection{Vehicle $i$'s Perspective}

A vehicle $i$ starts execution of Algorithm~\ref{algo:vehicle_persp}
at $\tarr_i$, its time of arrival into the region of interest.
\begin{algorithm}
\caption{Algorithm from a vehicle $i$'s perspective}
\label{algo:vehicle_persp}
\SetAlgoLined

\If {$t = \tarr_i$}{ %
  \txt{receive} $\tcoord_i$ \txt{and trajectory of vehicle preceding $i$
    in its lane}\\
  \provphase \hfill \\
  \txt{send provisional trajectory to IM}\\
  \txt{start provisional phase} }
    
\If {$t = \tcoord_i$}
  { 
    \txt{receive new trajectory from IM for coordinated phase}\\
    \txt{start coordinated phase}
  }
\end{algorithm}
Vehicle $i$ communicates with the IM as soon as it arrives at
$\tarr_i$. The IM prescribes $\tcoord_i$, the \emph{start time of
  coordination phase} for vehicle $i$, and also informs about the
planned trajectory of the vehicle (if any) that precedes vehicle $i$
on its lane. This is sufficient for vehicle $i$ to plan its trajectory
for the provisional phase, which ends at $\tcoord_i$. In particular,
vehicle $i$ computes its trajectory for the provisional phase by
solving optimal control Problem~\eqref{eqn:prob_provplan}, which we
refer to in Algorithm~\ref{algo:vehicle_persp} as \provphase. Vehicle
$i$ communicates its provisional phase trajectory back to the IM and
starts executing it at $\tarr_i$. At $\tcoord_{i}$, vehicle $i$
receives a new trajectory from the IM for the coordinated phase.

\subsubsection*{Provisional Phase of Vehicle $i$}

Here we describe \provphase, the method that vehicle $i$ utilizes to
compute the trajectory for its provisional phase. At $\tarr_i$,
vehicle $i$ obtains $\tcoord_i$, the start time of its coordination
phase, and the trajectory of the vehicle preceding it on its
lane. Vehicle $i$ computes an optimal trajectory under several
constraints including the \emph{intersection entry prevention
  constraint},
\begin{align}
  v_i(t) \leq \vpbnd(x_i(t)) \ldef \sqrt{2 \lbar{u}
  x_i(t)} , \label{eqn:provplanvelbound}
\end{align}
for all $t$ in the time interval of interest. The upper bound
$\vpbnd(x_i(t))$ is the maximum velocity that vehicle $i$ may have at
position $x_i(t)$ so that with maximum braking ($u_i(t) = \lbar{u}$)
vehicle $i$ can come to a stop before entering the intersection. Thus
this constraint prevents the vehicle from entering the intersection
under the bounded control constraint. Then, the optimal control
problem for vehicle $i$'s provisional phase is
\begin{align}
  &\underset{u_i(.)}{\max} \ 
  \int_{t^\mathcal{A}_i}^{t^\mathcal{A}_i+\pretime} \Big(W_v  v_i(t)
  - \big[W_a \ u_i^2(t) + W_j \ \dot{u}_i^2(t)\big]\Big) \ \mathrm{d}t
  \notag\\
  &\mathrm{ s.t. } \ %
  \eqref{eqn:DI} , \eqref{eqn:u-v-constr}, \eqref{eqn:rearend_safety},
  \eqref{eqn:provplanvelbound}  \
  \forall t \in [\tarr_i,\tarr_i+\pretime]. \label{eqn:prob_provplan}
\end{align}

\subsection{Intersection Manager's Perspective}

Now, we describe Algorithm~\ref{algo:IM_persp}, which is from the IM's
perspective.
\begin{algorithm}
\caption{Algorithm from IM's perspective}
\label{algo:IM_persp}
\SetAlgoLined

\If {$t = \tarr_i$}{%
  $\tcoord_i \gets k \coordinterval$, with
  $k = \min \{ k \in \natz : k
  \coordinterval \geq \tarr_i \}$ \\
  \txt{send to vehicle $i$,} $\tcoord_i$ \txt{and trajectory of
    vehicle
    preceding $i$ in its lane} \\
  \txt{receive vehicle $i$'s provisional trajectory }
}
    
\If {$t = k\coordinterval$} {%
  $\vcoord(k) \gets \{ i \ : \ \tarr_i \in \big( \ (k-1) \coordinterval, \ k \coordinterval \ \big]  \}$\\
  \coordphase \\
  \txt{send trajectories to vehicles}  $\vcoord(k)$ \\
  $k \gets k + 1$ \\
}

\end{algorithm}
As soon as a vehicle $i$ enters the region of interest, the IM sends
to vehicle $i$, the next instance of coordinated trajectory planning
as $\tcoord_i$ and the trajectory of the vehicle preceding vehicle $i$
on its lane $l_i$ so that vehicle $i$ can compute its provisional
phase trajectory and communicate it back to the IM. In this paper, for
simplicity, we assume that IM carries out coordinated planning
periodically at the instances $k\coordinterval$, where $k \in
\natz$. And we let $\tcoord_i = k \coordinterval$, where $k$ is the
smallest integer such that $k \coordinterval \geq \tarr_i$.

At each coordinated trajectory planning time instance
$k \coordinterval$, the IM first considers $\vcoord(k)$, the set all
the vehicles that have arrived during the interval
$\big( \ (k-1) \coordinterval, \ k \coordinterval \ \big]$. Then, it
computes a trajectory for each vehicle in $\vcoord(k)$ seeking to
achieve optimized coordination and ensures the vehicles cross the
intersection safely. We denote the coordinated phase planning problem
at the instance $k\coordinterval$ by \coordphase. The IM communicates
the trajectories for the coordinated phase to the vehicles in
$\vcoord(k)$, which then execute them.

In Section~\ref{sec:coop_plan_methods}, we present \coordphase, the
algorithm for planning the trajectories in the coordinated phase.

\begin{rem}[Computation instances]
  The coordination planning instances need not be periodic and may be
  adapted to the traffic.
  Also, in Algorithms~\ref{algo:vehicle_persp}
  and~\ref{algo:IM_persp}, we have presented various functions to be
  executed at specific time instances. But, this is purely for ease of
  exposition and one could easily modify these algorithms to account
  for computational delays. \remend
\end{rem}

In Section~\ref{sec:coop_plan_methods}, we present \coordphase, the
algorithm for planning the trajectories in the coordinated phase.

\section{Coordinated Phase}
\label{sec:coop_plan_methods}

This section presents the trajectory optimization for the coordinated
phase. We first present \emph{combined optimization}, which is a naive
centralized method and is not computationally scalable. Building on
this method, we present the data-driven sequential weighted algorithm,
that is significantly superior in terms of computational requirements.
Moreover, as we demonstrate through simulations in
Section~\ref{sec:sims}, this algorithm performs almost as well as the
combined optimization.

The planning for the coordinated phase is carried out periodically
with period $\coordinterval$. In particular, at the instance
$k \coordinterval$, trajectory planning is carried out for the set of
vehicles $\vcoord(k)$ that arrive into the region of interest during
the interval
$\big( \ (k-1) \coordinterval, \ k \coordinterval \ \big]$. In this
section, we discuss the methods for coordinated planning at an
arbitrary but fixed instance $k \coordinterval$. We also introduce the
set $\vscheddone$ that contains all the \emph{vehicles that have
  received a trajectory for the coordinated phase}. The vehicles in
$\vcoord(k)$ are added to $\vscheddone$ after they receive their
respective trajectories for the coordinated phase. For brevity, we
omit the argument $k$ for $\vcoord(k)$ in the rest of this
section. Further, notice that $\tcoord_i$ is the same for all vehicles
in $\vcoord(k)$. Hence, in the sequel, we drop the index $i$ from
$\tcoord_i$.

\subsection{Combined Optimization}

In this method, the IM computes the trajectories for all the vehicles
in $\vcoord$ simultaneously. The optimal control problem for the
combined optimization method is a variation of the problem \eqref{eqn:main_Prob}. 
The only differences are: the time horizon for
the coordinated phase is $\coordhor$ and the set of participating
vehicles is $\vcoord$. Specifically, the objective function is
\begin{equation*}
  J^{\coop} = \sum_{i \in \vcoord} 
  \int\displaylimits_{\tcoord}^{\tcoord + \coordhor}
  \Big( W_v \ v_i(t)  - \big[W_a \ u_i^2(t) + W_j \
  \dot{u}_i^2(t)\big] \Big)\ \mathrm{d}t 
\end{equation*}
and the optimal control problem for combined optimization is
\begin{subequations}
  \label{eqn:prob_combinedoptimization}
  \begin{align}
    & \underset{u_i(.), \ i \in \vcoord}{\max} \  J^{\coop}  \\
    \text{s.t.} \ %
    & \eqref{eqn:DI} , \eqref{eqn:u-v-constr}, \eqref{eqn:rearend_safety} \
      \forall t \in [\tcoord,\tcoord + \coordhor], \ \forall i \in
      \vcoord \\
    \ &\eqref{eqn:int-safety} \ \forall i,k \in \vcoord \cup \vscheddone \
        \mathrm{s.t.} \ c(l_i,l_k) = 0
        . \label{const:combined_opti_intersection_safety}
  \end{align}
\end{subequations}
Subsequent to solving~\eqref{eqn:prob_combinedoptimization} and
updating the trajectories for the vehicles, $\vscheddone$ is updated
to $\vscheddone \cup \vcoord$.

Combined optimization~\eqref{eqn:prob_combinedoptimization} requires
the IM to compute optimal trajectories for each feasible intersection
usage sequence and then pick the best sequence and the corresponding
optimal trajectories. However, the number of feasible sequences grows
exponentially with the number of vehicles on incompatible lanes. Thus,
this method is not scalable and is not well suited for the time and
safety critical problem of autonomous intersection management. Hence,
we next propose a computationally scalable and efficient method for
computing near optimal sequences and trajectories for the coordinated
phase.


\subsection{Data Driven Sequential Weighted Algorithm (DD-SWA)}
\label{subsect:DDSWA}

We now propose a scalable method for optimizing the intersection usage
sequence and trajectories of vehicles in the coordinated phase. We
call it \emph{data-driven sequential weighted algorithm}
(DD-SWA). 
The method scales linearly with the number of vehicles and as we
discuss in the sequel, this method is very amenable to a distributed
implementation. We present an overview of DD-SWA in
Algorithm~\ref{algo:DD-SWAalgo}. The algorithm begins with the set of
unscheduled vehicles,~$\vcoord$. In Step~\ref{stp:vfirst}, we identify
$\vfirst$, the set of vehicles in $\vcoord$ that are closest to the
intersection.
\begin{algorithm}
\caption{DD-SWA}
\label{algo:DD-SWAalgo}
\let\oldnl\nl
\If {$t = 0$}{
$\vscheddone \gets \emptyset$ \algcomm{set of scheduled vehicles} \\
} 
\If {$t = k\coordinterval$ , for $k \in \natz$ ,}{
  \While{$|\vcoord| > 0$}{
          $\vfirst \gets \{ i \in \vcoord \ | \ x_i \geq x_j, \
          \forall j \in \vcoord
          \text{ s.t. } l_j = l_i\}$ \label{stp:vfirst}\\
          \For{$i \in \vfirst$}{ %
            $p_i \gets $ \txt{precedence}($i$) \label{stp:precedence}
          }%
          $\istar \gets \argmax \{ p_i \ \vert \ i \in \vfirst \}$ \\
          \txt{traj\_opti}$(\istar)$
    \label{stp:traj-opti} 
      \\
      $\vcoord \gets \vcoord \setminus \istar $ \algcomm{remove
        $\istar$ from $\vcoord$ }\\
      \nonl{$\vscheddone \gets \vscheddone \cup \istar$
        \algcomm{$\istar$ is scheduled} \\} } }
\end{algorithm}
In Step~\ref{stp:precedence} of the algorithm, the \emph{precedence
  index} $p_i$ is computed for every vehicle $i$ in $\vfirst$.
The vehicle $\istar \in \vfirst$ with the highest precedence index
(after arbitrarily resolving any potential ties) is allowed to use the
intersection before any other vehicle in $\vfirst$. A trajectory for
the coordinated phase is then computed for $\istar$ in
Step~\ref{stp:traj-opti}, after which $\istar$ is removed from
$\vcoord$ and added to $\vscheddone$. This process is repeated
until $\vcoord$ is empty. The vehicles optimize their
trajectories sequentially so as to satisfy the intersection safety
constraint~\eqref{eqn:int-safety}. Next, we describe the computation
of the precedence indices \txt{precedence}($i$) and the trajectory
optimization.

\subsubsection{Computation of the Precedence Index
  \txt{precedence}(\texorpdfstring{$i$}{})}

We let the precedence index $p_i$ be a linear combination of certain
\emph{scheduling features} related to the vehicle $i \in \vfirst$
\begin{align}
  p_i & \ldef w_x (d+x_i(\tcoord_i)) + w_v v_i(\tcoord_i) + w_t
        (\tcoord_i - \tarr_i) + w_n \card{ \vfollow } + \notag\\
      & w_s \frac{\sum_{j \in \vfollow} (x_i(\tcoord_i) -
        x_j(\tcoord_i))}{\card{\vfollow}} + w_{\sigma} \ \sigma_{l_i}
        - w_w \ \tau_i . \label{eqn:precedenceindex}
\end{align}
Three of the scheduling features are based on the state at time
$\tcoord_i$ and history of the vehicle $i$, namely, distance traveled
since arrival $d+x_i(\tcoord_i)$, velocity $v(\tcoord_i)$ and time
since arrival $(\tcoord_i - \tarr_i)$ of vehicle $i$. Three features
capture the ``demand'' on lane $l_i$ that is ``following'' vehicle
$i$. First of these features is the number of vehicles
$\card{\vfollow}$, where $\vfollow$ is the \emph{set of vehicles that
  follow vehicle $i$ on lane $l_i$ at time $\tcoord_i$}. The second
feature is the average separation of vehicles in $\vfollow$ from
vehicle $i$. The third feature in this group is the average rate of
arrival of vehicles $\sigma_{l_i}$ on lane $l_i$. The final feature is
the \emph{minimum wait time to use the intersection}, $\wtime$, for
vehicle $i$. Specifically,
\begin{equation*}
  \wtime \ldef \max \{ \texit_m - \tcoord_i \ | \ m \in
  \vscheddone \ s.t. \ c(l_i,l_m) = 0 \} .
\end{equation*}
All the weights in \eqref{eqn:precedenceindex} are positive, which
means minimum wait times effect the precedence index negatively. This
is to prevent frequent switching of the right of way between
incompatible lanes. The weighted linear combination of the features
makes the computation of the precedence indices extremely simple. In
this work, we propose tuning the weights based on offline simulations.

\subsubsection{Trajectory Optimization}

In Algorithm~\ref{algo:DD-SWAalgo}, the trajectories of the vehicles
are computed sequentially. In each iteration, the vehicle $\istar$
with the greatest precedence index is selected for trajectory
optimization. However, notice from the combined optimization
problem~\eqref{eqn:prob_combinedoptimization} that the optimization of
the trajectory of vehicle $i$ is coupled to the optimization of the
other vehicles' trajectories through the constraints. One of the
purposes of the precedence indices is to set a precedence in the
constraint~\eqref{eqn:int-safety}. Even then, the coupling is not
fully eliminated. In order to compute the trajectories sequentially,
we seek to decouple~\eqref{eqn:prob_combinedoptimization} into several
optimization problems - one per vehicle. We have to do this in a
manner that ensures we still get near optimal solutions
to~\eqref{eqn:prob_combinedoptimization}. Such a method aids in
developing a distributed algorithm.

A natural starting point for constructing such a decoupled problem is
to consider only the term involving $\istar$ in $J$
of~\eqref{eqn:prob_combinedoptimization}. However, this ignores the
``demand'' for the intersection usage. Thus, we seek to modify the
``marginal'' cost function of the vehicle $\istar$ by incorporating a
measure of the demand. We first introduce a notion of \emph{demand},
$\demand_i$, from vehicle $i$ and those following it on the lane
$l_i$. Specifically,
\begin{equation*}
  \demand_i \ldef p_i + w_w \tau_i .
\end{equation*}
Then, we let the objective function for generating a trajectory for
vehicle $\istar$ to be
\begin{align}
  J_{{\istar}}^{\coop} = %
  & \int\displaylimits_{\tcoord_{\istar}}^{\tcoord_{\istar} + \coordhor} \Big(
    \overline{W}_{v} v_{\istar}(t)  - \left[W_a u_{\istar}^2(t) + W_j
    \dot{u}_{\istar}^2(t) \right] \Big) \mathrm{d}t
    , \label{eqn:SWA_objfun}
\end{align}
where
$\displaystyle \overline{W}_{v} \ldef w_l \frac{\sum_{i \in \vfirst}
  \demand_{i}}{|\vfirst|} W_v$ and $w_l$ is a scaling factor. Then,
\txt{traj\_opti}($\istar$) in Step~\ref{stp:traj-opti} of Algorithm
\ref{algo:DD-SWAalgo} is
\begin{subequations}
  \begin{align}
    \underset{u_{i^*}(.)}{\max} \ J_{\istar}^{\coop} \text{ s.t. } \ %
    &\eqref{eqn:DI} , \eqref{eqn:u-v-constr},
      \eqref{eqn:rearend_safety} \ \forall t \in
      [\tcoord_{\istar},\tcoord_{\istar}+\coordhor] \\
    &\tent_{\istar} \geq \tau_{\istar} +
      \tcoord_{\istar} , \label{const:DDSWA_intersection_safety}
  \end{align} \label{eqn:traj-opti-DDSWA}
\end{subequations}
with $i = \istar$ in~\eqref{eqn:DI} , \eqref{eqn:u-v-constr} and
\eqref{eqn:rearend_safety}.

\begin{rem}[Computational complexity of DD-SWA]
  In DD-SWA, we obtain the intersection usage order by computing the
  precedence indices for vehicles in $\vfirst$ as a weighted linear
  combination of the scheduling features and selecting the maximizer
  of the precedence indices. Also, note that
  $\card{\vfirst} \leq \card{\lanes}$, the number of lanes. These
  aspects make the computation of the intersection usage order very
  simple. Further, the computation of the trajectory of the vehicles
  is also of significantly lesser complexity since for each vehicle we
  essentially need to solve an optimal control problem in which the
  only decision variables are those related to the vehicle itself and
  the constraints are significantly simplified. In the sequel, we use
  simulations to demonstrate that DD-SWA performs only marginally
  worse compared to combined optimization while the computation time
  per vehicle essentially stays constant for a wide range of traffic
  arrival rates. On the other hand for combined optimization, the
  computation time per vehicle increases exponentially with the
  traffic arrival rate. \remend
\end{rem}

We see that in all the three optimal control
problems~\eqref{eqn:prob_provplan},
\eqref{eqn:prob_combinedoptimization} and~\eqref{eqn:traj-opti-DDSWA}
feasibility implies safety. In the following result we show that if
the vehicles arrive into the region of interest in a safe
configuration then they are always in a safe configuration (both
in-lane and within the intersection) for all time during the
provisional as well as the coordinated phases.

\begin{theorem}[Sufficient condition for system wide inter-vehicle
  safety] \label{thm:feasiblity}%
  If every vehicle $i$ satisfies the rear-end safety constraint
  \eqref{eqn:rearend_safety} at the time of its arrival, $\tarr_i$,
  and its initial velocity is such that that
  $v_i(\tarr_i) \leq \min\{\bar{v},\vpbnd(-d)\}$, feasiblility of
  problems \eqref{eqn:prob_provplan},
  \eqref{eqn:prob_combinedoptimization} and
  \eqref{eqn:traj-opti-DDSWA} is guaranteed. Consequently, safety of
  all the vehicles is also guaranteed for all time.
\end{theorem}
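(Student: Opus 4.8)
The plan is to exploit the observation, already noted just before the theorem, that for each of the three optimal control problems feasibility implies safety; hence it suffices to certify that each problem admits at least one feasible trajectory and then propagate the hypothesis forward in time. The certificate I would use throughout is the \emph{maximum-braking} trajectory, in which vehicle $i$ applies $u_i(t) = \lbar{u}$ until it stops and $u_i(t) = 0$ thereafter. This control is trivially within the bounds \eqref{eqn:u-v-constr} and keeps $v_i \in [0,\bar{v}]$; the content of the proof is that it is ``safe against anything,'' which is exactly what lets me build feasible solutions by induction over the stream of vehicles.

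The heart of the argument, and the step I expect to be the main obstacle, is a pair of invariance claims tied to the specific algebraic forms of $\vpbnd$ in \eqref{eqn:provplanvelbound} and of $D$ in \eqref{eqn:rear-end distance}. First, along the maximum-braking arc $v_i^2(t) = v_i^2(t_0) + 2\lbar{u}\,(x_i(t) - x_i(t_0))$, so the curve $v = \vpbnd(x) = \sqrt{2\lbar{u}x}$ is itself a maximum-braking arc terminating at $v=0$, $x=0$; consequently a vehicle starting with $v_i(\tarr_i) \le \vpbnd(-d)$ and braking maximally satisfies the intersection-entry-prevention constraint \eqref{eqn:provplanvelbound} for all later time and in fact stops before $x=0$. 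Second, I would show that if the follower $i$ brakes maximally while the leader $j$ executes any feasible trajectory, the rear-end margin $m \ldef x_j - x_i - D(v_i,v_j)$ is nondecreasing. On the region where $v_i > v_j$ (so the $\max$ in \eqref{eqn:rear-end distance} is active) a direct computation with $u_i = \lbar{u}$ gives
\[
  \dot m = (v_j - v_i) - \frac{v_i\lbar{u} - v_j u_j}{-\lbar{u}} = v_j\,\frac{\lbar{u} - u_j}{\lbar{u}} \ge 0 ,
\]
since $u_j \ge \lbar{u}$, $\lbar{u} < 0$ and $v_j \ge 0$; on the complementary region $D = L_j + r$ is constant and $\dot m = v_j - v_i \ge 0$. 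The care needed here is handling the nonsmooth switching between the two regimes and the instants at which either vehicle reaches $v=0$, where one verifies by one-sided limits, using continuity of the states, that $m$ does not jump down.

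With these two facts the remainder is assembly. For the provisional phase \eqref{eqn:prob_provplan} I would order the vehicles on each lane from front to back and induct: the front vehicle has a vacuous rear-end constraint and its maximum-braking arc is feasible by the first invariance; each follower, which by hypothesis satisfies \eqref{eqn:rearend_safety} at its arrival, also has its maximum-braking arc feasible by the second invariance applied against the (already feasible) predecessor trajectory, while entry prevention follows from the first invariance. Because \eqref{eqn:provplanvelbound} and \eqref{eqn:rearend_safety} are then enforced up to $\tcoord$ (the coordination instant lies within the provisional horizon, $\tcoord \le \tarr_i + \pretime$), at the switching time the state inherits both $v_i(\tcoord) \le \vpbnd(x_i(\tcoord))$ and rear-end safety. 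I would certify \eqref{eqn:prob_combinedoptimization} and \eqref{eqn:traj-opti-DDSWA} by the same maximum-braking trajectory, now seeded at $\tcoord$: every vehicle stops before $x=0$, so no vehicle in $\vcoord$ ever enters the intersection, making \eqref{eqn:int-safety} (respectively \eqref{const:DDSWA_intersection_safety}, with $\tent_{\istar}=\infty$) hold vacuously against both the other vehicles in $\vcoord$ and the committed trajectories in $\vscheddone$, while rear-end safety is again maintained by the second invariance. Finally, inducting over the coordination rounds $k$ — with $\vscheddone$ carrying the already-committed, feasible and safe trajectories — propagates feasibility, and hence safety, across the entire continual stream; state continuity at $\tcoord$ bridges the two phases, yielding inter-vehicle safety for all time.
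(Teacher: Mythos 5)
Your proposal is correct and follows essentially the same route as the paper's own proof: feasibility of each of \eqref{eqn:prob_provplan}, \eqref{eqn:prob_combinedoptimization} and \eqref{eqn:traj-opti-DDSWA} is certified by the vehicle's ability to come to a stop before the intersection while maintaining rear-end safety, the provisional phase hands off a state at $\tcoord_i$ satisfying both \eqref{eqn:rearend_safety} and \eqref{eqn:provplanvelbound}, and feasibility of the problems then implies safety for all time. The only difference is one of rigor, in your favor: you make explicit the maximum-braking certificate and verify the monotonicity of the margin $x_j(t) - x_i(t) - D(v_i,v_j)$ under $u_i = \lbar{u}$, facts the paper asserts implicitly by appealing to the construction of $D$ in \cite{PT-JC:2018-tcns, PT-JC-2019}.
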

\begin{proof}
  The assumption that each vehicle $i$ satisfies the rear-end safety
  constraint~\eqref{eqn:rearend_safety} at $\tarr_i$ ensures that
  there exists a control trajectory to ensure rear-end safety with the
  vehicle that precedes $i$ on its lane $l_i$. Further, the assumption
  that $v_i(\tarr_i) \leq \min\{\bar{v},\vpbnd(-d)\}$ implies that
  there exists a control trajectory that ensures that the vehicle can
  come to a stop before the beginning of the intersection. Thus, the
  optimization problem for the provisional phase
  \eqref{eqn:prob_provplan} is feasible.

  If problem \eqref{eqn:prob_provplan} is feasible, the trajectory for
  the provisional phase guarantees that vehicle $i$ satisfies the
  rear-end safety constraint~\eqref{eqn:rearend_safety} and the
  intersection entry prevention
  constraint~\eqref{eqn:provplanvelbound} at $\tcoord_i$, the start
  time of the coordinated phase of vehicle $i$. This property ensures
  that the intersection safety constraints
  \eqref{const:combined_opti_intersection_safety} and
  \eqref{const:DDSWA_intersection_safety} for combined optimization
  and DD-SWA respectively are also feasible as the vehicle can come to
  a stop before the beginning of the intersection if necessary. Thus,
  feasibility of problem \eqref{eqn:prob_provplan} guarantees the
  feasibility for problems \eqref{eqn:prob_combinedoptimization} and
  \eqref{eqn:traj-opti-DDSWA}. Since feasiblility of the problems
  ensure rear-end safety and intersection safety, safety between every
  pair of vehicles is also guaranteed.
\end{proof}

\subsection{Distributed Implementation of the Algorithm}

Note that each vehicle $i$ can implement its provisional phase in a
distributed manner by communicating only with the vehicle preceding it
in its lane. The design of DD-SWA is also amenable to a distributed
implementation. The information required to calculate a vehicle's
precedence index and to solve its trajectory optimization problem can
be obtained with distributed communication.


Each vehicle can obtain information such as distance travelled,
velocity and time since arrival locally. 
The other scheduling features, the safety constraints
\eqref{eqn:rearend_safety} and \eqref{eqn:int-safety} and the weights
for the scheduling features require communication. We make a
distinction between the types of communication required for this
purpose. The three types of communication required are: (1)
intra-lane, (2) inter-lane and (3) central communication. In
intra-lane communication, a vehicle $i \in \vfirst$ needs to
communicate with only a vehicle $j$ such that $\follow(i,j) = 1 $ or
$\follow(j,i) = 1 $, \textit{i.e.}, $i$ needs to communicate with just
the vehicles immediately preceding or following it on its lane $l_i$.
The number of vehicles following $i$, $\card{\vfollow}$, can be
counted in a distributed manner and can be communicated from one car
to the next in $\vfollow$, the vehicles following $i$ and ultimately
to the vehicle $i$ itself.  
Similarly, the vehicle immediately in front of $\istar$ can
communicate its position and velocity trajectory which are sufficient
to compute \eqref{eqn:rearend_safety}. The intersection safety
constraint and the minimum wait time feature require $\istar$ to
communicate and receive the exit time of the vehicle on an
incompatible lane. We denote such communication as inter-lane
communication. Lastly,
intersection-specific information such as the weights for the
scheduling features $w_x,\dots,w_s$ and the average arrival rate of
traffic $\sigma_{l_i}$ need to be communicated to the vehicles in
$\vfirst$ from a central infrastructure, such as an IM. Thus the
central infrastructure's or IM's function is essentially restricted to
communication.

\section{Simulations} \label{sec:sims}

To evaluate the proposed algorithm, a simulation framework using
Casadi \cite{casadi_2018} and Python was created. All the simulations
were performed on an Intel i9-9900k 3.6GHz processor with 128GB of
RAM. In the simulations, we assume that vehicles arrive according to a
Poisson process with an average arrival rate of $\sigma_l$ on lane
$l \in \lanes$.  To evaluate the proposed algorithm, we compare
combined optimization and DD-SWA against a signalized intersection,
the Hierarchical-Distributed algorithm \cite{PT-JC-2019}
and the coordinated phase with a first-in first-out (FIFO) protocol
for the sequence of intersection usage. The simulation results are for
the particular case where vehicles only pass straight across the
intersection. However, the proposed algorithms hold
even when turning is allowed. We present the algorithms and the
comparisons in greater detail below\footnote{A video describing the main features of the proposed algorithm and
simulations is available at:
\url{http://www.ee.iisc.ac.in/people/faculty/pavant/files/figs/Data-Driven-IM.mp4}.}.

\subsubsection*{Arrival of Vehicles}

Recall that the existence of feasible trajectories for the provisional
and coordinated phases is guaranteed if the conditions mentioned in
Theorem~\ref{thm:feasiblity} are satisfied. Thus to ensure
feasibility, we restrict the maximum velocity of the vehicles at the
time of their arrival, to $\min\{\bar{v},\vpbnd(-d)\}$. In the
simulations here, we assume that vehicles arrive according to a
Poisson process with an average arrival rate of $\sigma_l$ on lane
$l \in \lanes$. However, a specific realization of arrival times of
the vehicles may cause a violation of the rear-end safety constraint
at the arrival time itself. To avoid this, we check the separation
between the successive vehicles at the time of arrival. If the
constraint \eqref{eqn:rearend_safety} is violated, the arrival of the
vehicle is delayed until the constraint is satisfied.

To evaluate the proposed algorithm, we compare combined optimization
and DD-SWA against a signalized intersection, the
Hierarchical-Distributed algorithm proposed in \cite{PT-JC-2019} and
the coordinated phase with a first-in first-out (FIFO) protocol for
the sequence of intersection usage. The simulation results that we
present here are for the particular case where vehicles only pass
straight across the intersection, \textit{i.e.} vehicles arrive in
lanes $ l \in \lanes_s = \{2,5,8,11\}$. However, the proposed
algorithms hold even when turning is allowed. We present the
algorithms and the comparisons in greater detail below.

\subsubsection*{Signalized Intersection}
In this algorithm, every vehicle $i$ that enters the region of
interest performs \txt{prov\_phase}($i$) to approach the intersection.
When a lane $l$ receives a green signal, all the vehicles in lane $l$
are considered to be a part of $\vcoord$ and they are given a
\emph{green trajectory} to exit the intersection by solving problem
\eqref{eqn:prob_combinedoptimization}. The cycle times and green times
for the signals are obtained using Webster's method
\cite{TU-etal_2015} corresponding to the arrival rate $\sigma_l$ in
each lane.



\subsubsection*{Hierarchical-Distributed (HD) Algorithm}
This is the algorithm presented in \cite{PT-JC-2019}. 

\subsubsection*{First-In First-Out (FIFO)}
We use a FIFO protocol for the coordinated phase in these simulations.

\subsubsection*{Simulation Parameters}
Table \ref{tab:genparam} lists the parameters of the intersection and
the vehicles that are common to all the algorithms.
\begin{table}[!htb]
\renewcommand{\arraystretch}{0.9}
\small
\caption{ General Simulation Parameters}
\label{tab:genparam}
\begin{tabularx}{0.49\textwidth}{l@{\extracolsep{\fill}} c c}
  \hline
  \multicolumn{3}{ c }{Intersection Parameters}\\
  \hline 
  \textbf{Parameter} & \textbf{Symbol} & \textbf{Value} \\
  \hline
  Length of branch & $d$ & $60 \ m$\\
  Length of intersection (Straight) & $s_l$ & $20 \ m$ \\ 
  Length of vehicle & $L_i$ & $4.3 \ m$ \\ 
  Robustness parameter \eqref{eqn:rear-end distance} & $r$ & $0.2$ m  \\
  Min. Acceleration & $\lbar{u}$ & $-3 \ m/s^2$ \\
  Max. Acceleration & $\bar{u}$ & $3 \ m/s^2$ \\  
  Max. Velocity & $\bar{v}$ & $11.11 \ m/s$ \\
  \hline
  \multicolumn{3}{ c }{Proposed Algorithm Parameters}\\
  \hline
  Time interval for coordinated phase & $\coordinterval$ & $3 \ s$ \\
  Time horizon for provisional phase & $\pretime$ & $\tcoord_i - \tarr_i$\\
  Time horizon for coordinated phase & $\coordhor $ & $30 \ s$ \\
  Time horizon for objective function \eqref{eqn:cf1} & $\thor$ & $30 \ s$   \\
  \hline
\end{tabularx}
\end{table}
We conducted simulations using
all the algorithms for several arrival rates of traffic. We chose the
simulation time for each simulation to be equal to the time duration
of 10 cycles of a signalized intersection corresponding to the
particular arrival rate $\sigma$ obtained from the Webster's method~\cite{TU-etal_2015}. In each of the
simulations, we conducted 20 trials for each of the algorithms for
each arrival rate $\sigma$. Then, we compared the average time to
cross and average objective function value per vehicle over the 20
trials for each value of $\sigma$ across all the algorithms.

\subsection{Results}

We present 3 sets of comparisons between the various algorithms
mentioned previously. Table \ref{tab:weights} indicates the weights on
the scheduling features used for computing the precedence
index~\eqref{eqn:precedenceindex} in DD-SWA in each of the comparative
simulations.

\begin{table*}[!htb]
\centering
\caption{Weights for comparisons}
\renewcommand{\arraystretch}{1}
\begin{tabular}{l c c c c }

\hline
\multicolumn{5}{ c }{\textsc{General Weights}}\\
\hline
 \textbf{Parameter} & \textbf{Symbol} & \textbf{Comparisons 1 and 2} & \textbf{Comparison 3} & \textbf{Comparison 4}  \\ 
 \hline
	Weight on acceleration term  & $W_a$  & $0$ & $0$ & $1$ \\
	Weight on jerk term          & $W_j$  & $0$ & $0$ & $1$ \\
	Weight on velocity term      & $W_v$  & $1$ & $1$ & $1$ \\
\hline
\multicolumn{5}{ c }{ \textsc{DD-SWA Weights}}\\
\hline
  \textbf{Scheduling Features} & \textbf{Weight Symbol} & \textbf{Comparison 1 and 2} & \textbf{Comparison 3} & \textbf{Comparison 4}\\
\hline
	Distance travelled since arrival                  & $w_x$        & $0.1$  & $0.5$  & $0.8$  \\ 
	Velocity                                          & $w_v$        & $5$    & $4$    & $7$    \\
	No. of vehicles following $i$ in $l_i$            & $w_n$        & $4.5$  & $6$    & $5$  \\
	Time since arrival                                & $w_t$        & $3$    & $3$    & $5$  \\
	Average arrival rate in $l_i$                     & $w_{\sigma}$ & $40$   & $65$   & $40$ \\
	Average separation of vehicles from $i$ in $l_i$  & $w_s$        & $6$    & $7$    & $7$  \\
	Minimum wait time to use the intersection         & $w_w$        & $0.5$  & $1$    & $5$  \\
	Average demand scaling factor                     & $w_l$        & $0.02$ & $0.02$ & $0.02$ \\ 
\hline
\end{tabular}
	
\label{tab:weights}
\end{table*}
It also indicates the weights on the scheduling features used in
DD-SWA in each of the comparative simulations. Next, we discuss in
detail each of the four comparisons.

\subsubsection{Comparison 1}

In this comparison, there is no weight on comfort of the passengers in
the objective functions and the vehicles aim to only maximize the
distance travelled. In particular, in the ``running cost'', the
weights on acceleration and jerk terms ($W_a$ and $W_j$ respectively)
are set to 0 and the weight on velocity term ($W_v$) is set to 1 for
trajectory optimization problems for the provisional phase and the
coordinated phase. In the HD algorithm, the fuel cost represented by
$F_i(\bar{v}_i)$ in Equation (5) of \cite{PT-JC-2019} is set to 0 so
that the vehicles only aim to minimize the time spent within the
intersection. This ensures a fair comparison between the HD algorithm
and other algorithms. We compare the average time to cross (TTC) for
the vehicles under the different algorithms. We show simulation
results for arrival rates ($\sigma$) in the range of $0.01$ to $0.09$
vehicles/s per lane with an increment of $0.01$ vehicles/s per
lane. Figure~\ref{fig:comp123}(a) shows that the average TTC for
vehicles with combined optimization and DD-SWA is comparable for all
arrival rates in the considered range. FIFO performance is marginally poor
compared to DD-SWA and Combined Optimization as we are only considering
low arrival rates. The HD algorithm's performance
is comparable for arrival rates between $0.01$ and $0.03$ vehicles/s
per lane but performs poorly beyond $0.04$ vehicles/s. The signalized
algorithm performs better than the HD algorithm after $0.08$
vehicles/s. 
\begin{figure*}[!htb]

\vspace*{0.6cm}

\begin{tikzpicture}[overlay, remember picture]
\node[anchor=north west, 
      xshift=2.3cm, 
      yshift=0.8cm] 
     at (0,0) 
     {\includegraphics[width=13.5cm]{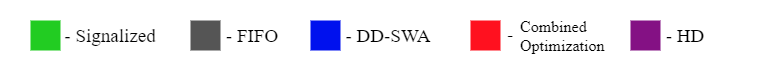}}; 
\end{tikzpicture}

\begin{multicols}{4}

\resizebox{4.8cm}{4.8cm}{
\begin{tikzpicture}
  \node (img1)  {\includegraphics[scale=0.15]{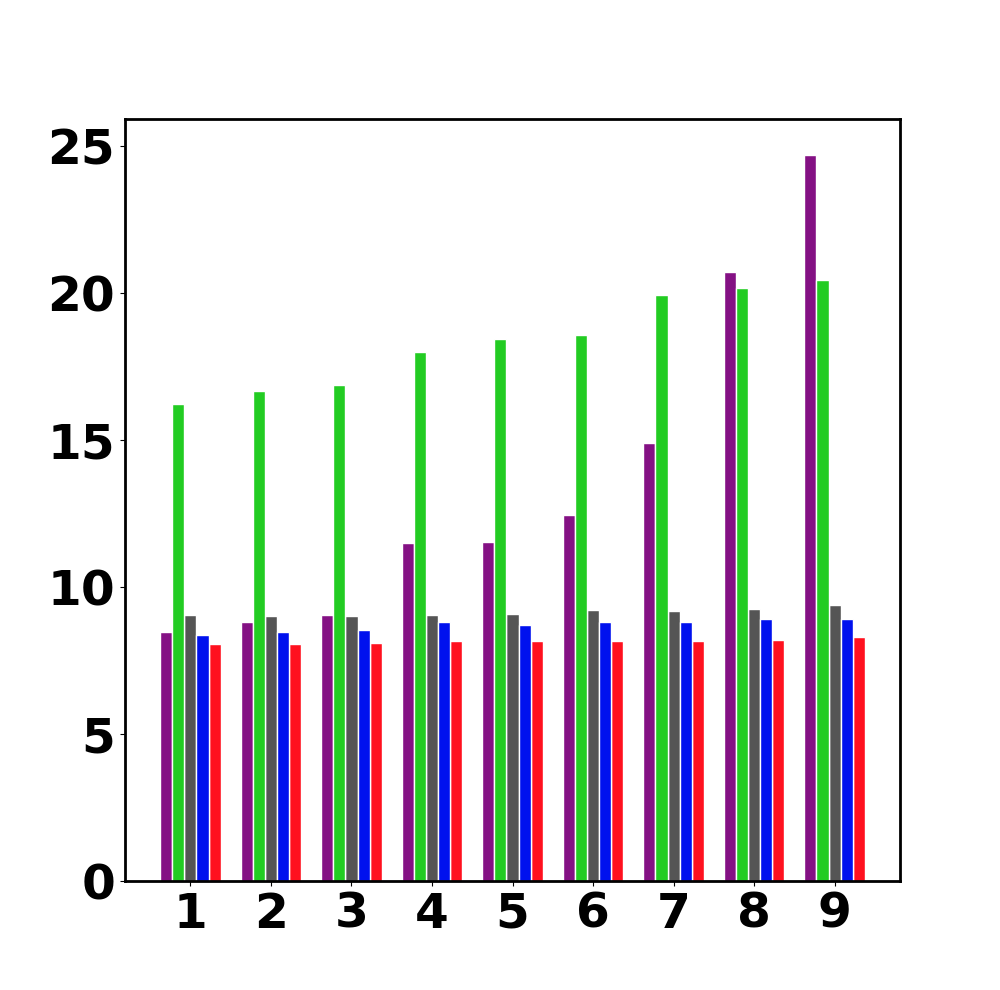}};
  \node[below of= img1, node distance=0cm, yshift=-2.0cm,font=\color{black}]  {\small $100\sigma$ (veh./s/lane)};
  \node[left of= img1, node distance=0cm, rotate=90, anchor=center,yshift=2.1cm,font=\color{black}] { \small Avg. TTC (s)};
\end{tikzpicture}
}
\caption*{(a)}
\resizebox{4.8cm}{4.8cm}{
\begin{tikzpicture}
  \node (img1)  {\includegraphics[scale=0.15]{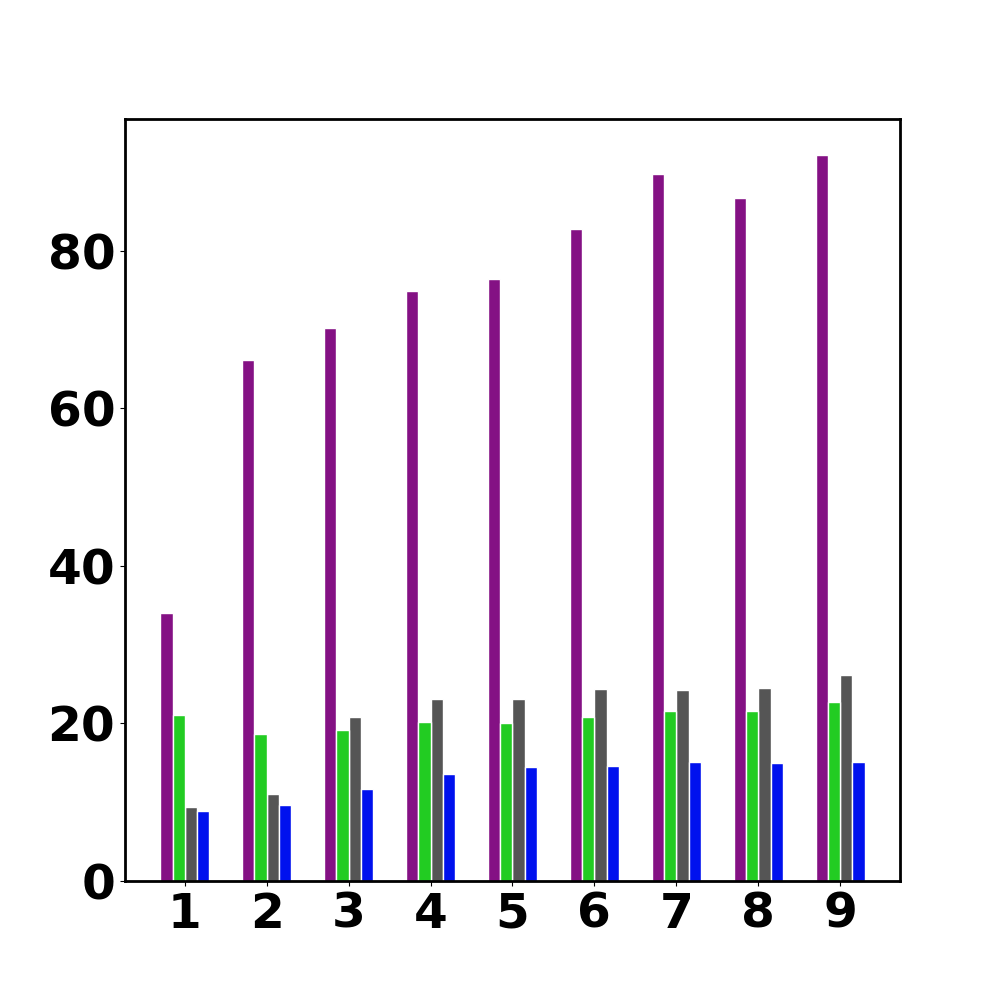}};
  \node[below of= img1, node distance=0cm, yshift=-2.0cm,font=\color{black}]  {\small  $10\sigma$ (veh./s/lane)};
  \node[left of= img1, node distance=0cm, rotate=90, anchor=center,yshift=2.1cm,font=\color{black}] {\small Avg. TTC (s)};
\end{tikzpicture}
}
\caption*{(b)}

\resizebox{4.8cm}{4.8cm}{
\begin{tikzpicture}
  \node (img1)  {\includegraphics[scale=0.15]{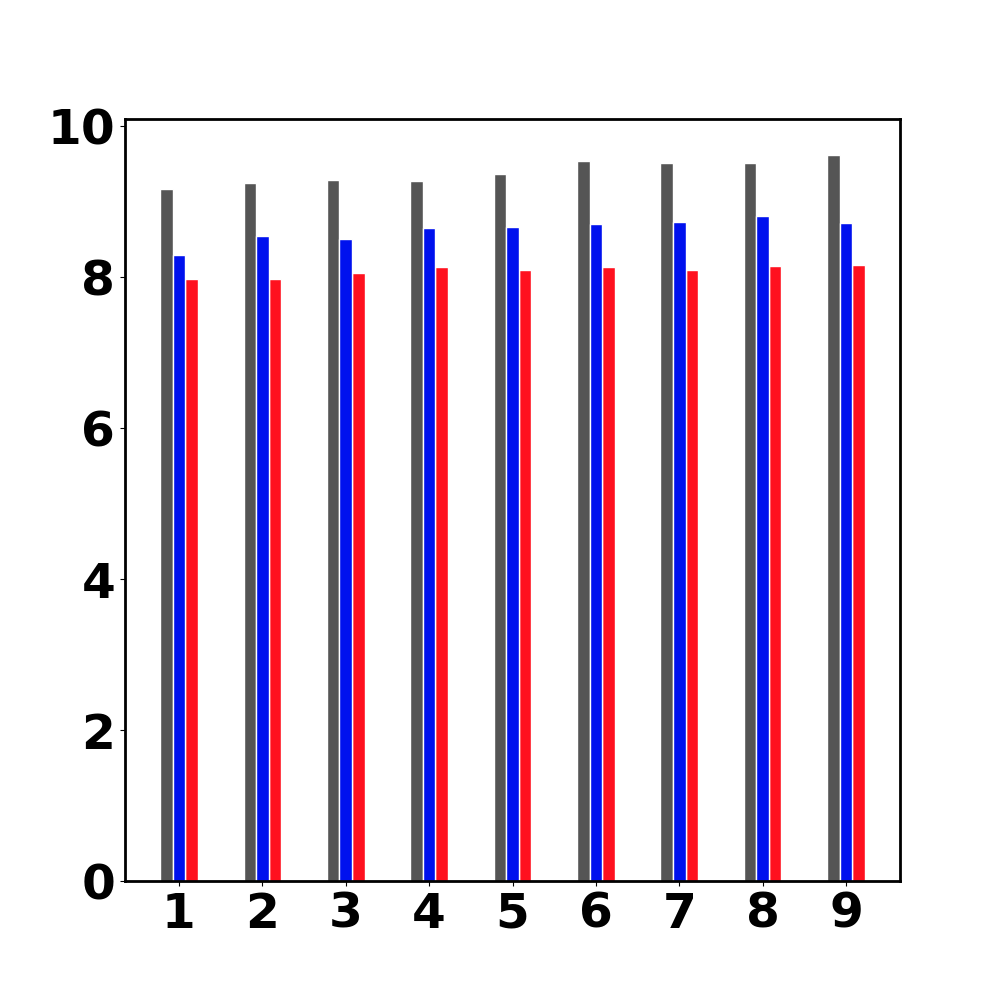}};
  \node[below of= img1, node distance=0cm, yshift=-2.0cm,font=\color{black}]  {\small $100\sigma$ (veh./s/lane)};
  \node[left of= img1, node distance=0cm, rotate=90, anchor=center,yshift=2.1cm,font=\color{black}] { \small Avg. TTC (s)};
\end{tikzpicture}
}
\caption*{(c)}
\resizebox{4.8cm}{4.8cm}{
\begin{tikzpicture}
  \node (img1)  {\includegraphics[scale=0.15]{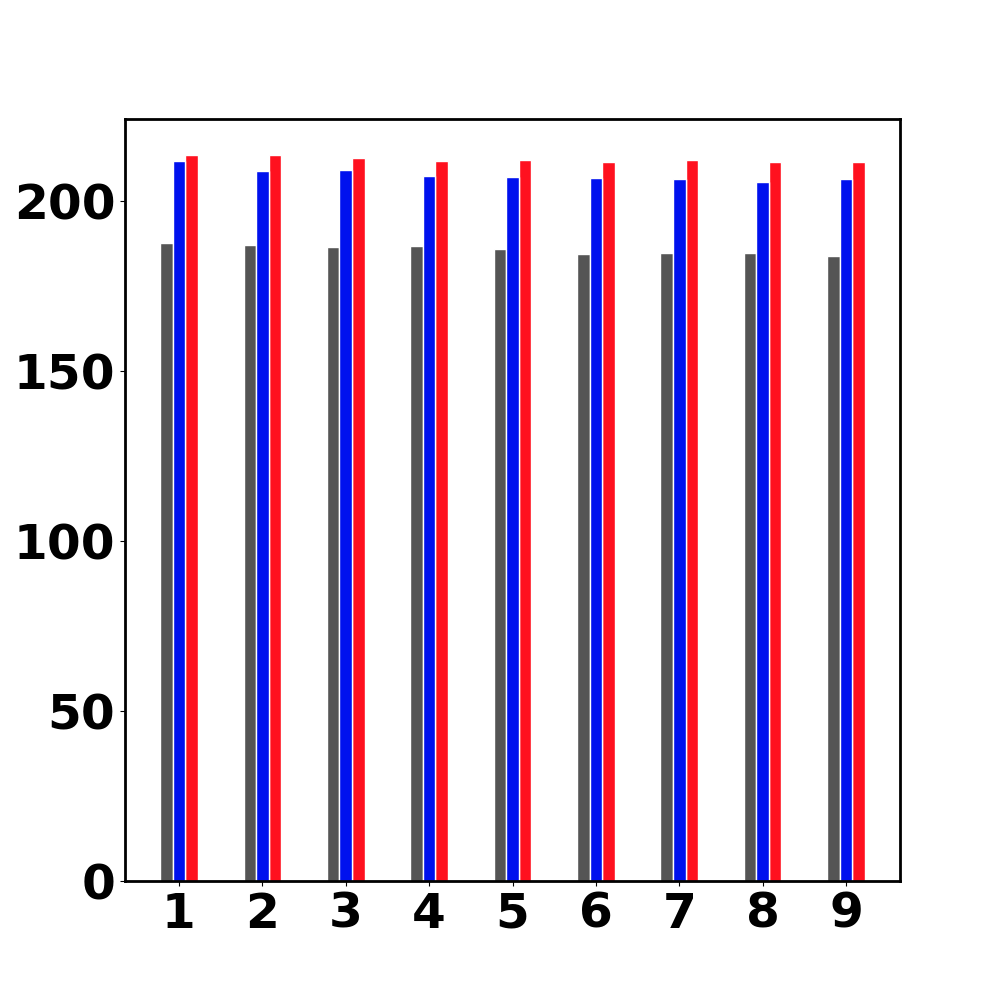}};
  \node[below of= img1, node distance=0cm, yshift=-2.0cm,font=\color{black}]  {\small $100\sigma$ (veh./s/lane)};
  \node[left of= img1, node distance=0cm, rotate=90, anchor=center,yshift=2.1cm,font=\color{black}] {\small Avg. objective value};
\end{tikzpicture}
}
\caption*{(d)}

\end{multicols}
\vspace*{-0.5cm}

\caption{Results of Comparisons 1, 2 and 3 for various arrival rates
  $\sigma$. In Comparisons 1 and 2, the arrival rate is homogeneous
  across all lanes. In Comparison 3, the arrival rate is
  inhomogeneous. (a) Comparison 1 - the average time to cross (TTC)
  for low arrival rates. (b) Comparison 2 - the average time to cross
  (TTC) for high arrival rates.
  (c) Comparison 3 - the average time to cross (TTC) for the
  vehicles. (d) Comparison 3 - the average objective value, which is
  the average distance traversed by the vehicles from the time of
  their arrival. }
\label{fig:comp123}
\end{figure*}

\subsubsection{Comparison 2}

In Comparison 2, the objective is again to maximize only the distance
travelled by the vehicles, but the comparison is made for arrival
rates ($\sigma$) from $0.1$ to $0.9$ vehicles/s per lane. As the
computation time for combined optimization is significantly higher
compared to the other algorithms, we choose to not include it for
comparison 2. Figure \ref{fig:comp123}(b) shows that DD-SWA continues
to perform better than all the other algorithms. Although the time to
cross initially increases for DD-SWA, it saturates at $0.4$ vehicles/s
per lane. FIFO is initially better than the signalized intersection 
until $0.2$ vehicles/s but it's performance rapidly deteriorates as 
the arrival rate increases. The signalized algorithm outperforms FIFO at $0.3$
 vehicles/s and outperforms the HD algorithm at all arrival rates in this range.
However, it does not perform better than the DD-SWA. The HD algorithm performs
significantly worse than the other algorithms due to it's nature of 
creating bubbles with multiple vehicles.

\subsubsection{Comparison 3}

In Comparison 3, we compare between combined optimization, DD-SWA and FIFO
when the arrival of traffic is inhomogeneous, \textit{i.e.}, when the
arrival rates are not the same for all the lanes in consideration. In
particular, we set $\sigma_2 = \sigma_8 = \sigma$ and
$\sigma_5 = \sigma_{11} = \frac{\sigma}{2}$ for different values of
$\sigma$. We again set the weights on acceleration and jerk terms to 0
and the weight on the velocity term to 1 in this comparison. Figure
\ref{fig:comp123}(c) shows the average time to cross for the vehicles
for combined optimization and DD-SWA. The performance of DD-SWA is
only marginally poor compared to that of combined optimization, but FIFO
performs significantly poorly compared to both the algorithms. Figure
\ref{fig:comp123}(d) shows the average objective value~\eqref{eqn:cf1}
over a period of $\tarr_i$ to $\tarr_i+\thor$ for every vehicle. To
compute the objective value, only the vehicles that crossed the
intersection by the end of the simulation time were considered to be
in the set $\vset$ in the objective function \eqref{eqn:cf1}.  The
average objective value per vehicle is depicted in
\ref{fig:comp123}(d).  Note that the weights on the scheduling
features in DD-SWA were tuned to improve its performance.

\subsubsection{Comparison 4}

Combined optimization, DD-SWA and FIFO are compared against each other
 with weights on the velocity, acceleration and jerk
terms are all set to 1. Figures \ref{fig:comp45}(a) and (b) depict the
average TTC and the average objective value for the two methods from
$\tarr_i$ to $\tarr_i + \thor$. A decrease in the average objective
value and an increase in the average TTC can be observed as there is
an emphasis on both comfort and the distance travelled by the
vehicles. It can be observed that combined optimization marginally
outperforms DD-SWA both in terms of the average objective value and
the average TTC. FIFO performs significantly poorly compared to the other two algorithms. Similar to the legend in Figure \ref{fig:comp123},
the blue and red bars in Figure \ref{fig:comp45} correspond to DD-SWA
and combined optimization respectively.

\begin{figure}[!htb]

\vspace*{0.6cm}

\begin{tikzpicture}[overlay, remember picture]
\node[anchor=north west, 
      xshift=0.0cm, 
      yshift=0.8cm] 
     at (0,0) 
     {\includegraphics[width=9cm]{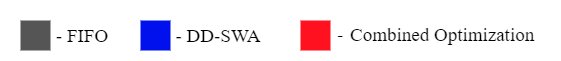}}; 
\end{tikzpicture} 

\begin{multicols}{4}

\resizebox{4.8cm}{4.8cm}{
\begin{tikzpicture}
  \node (img1)  {\includegraphics[scale=0.15]{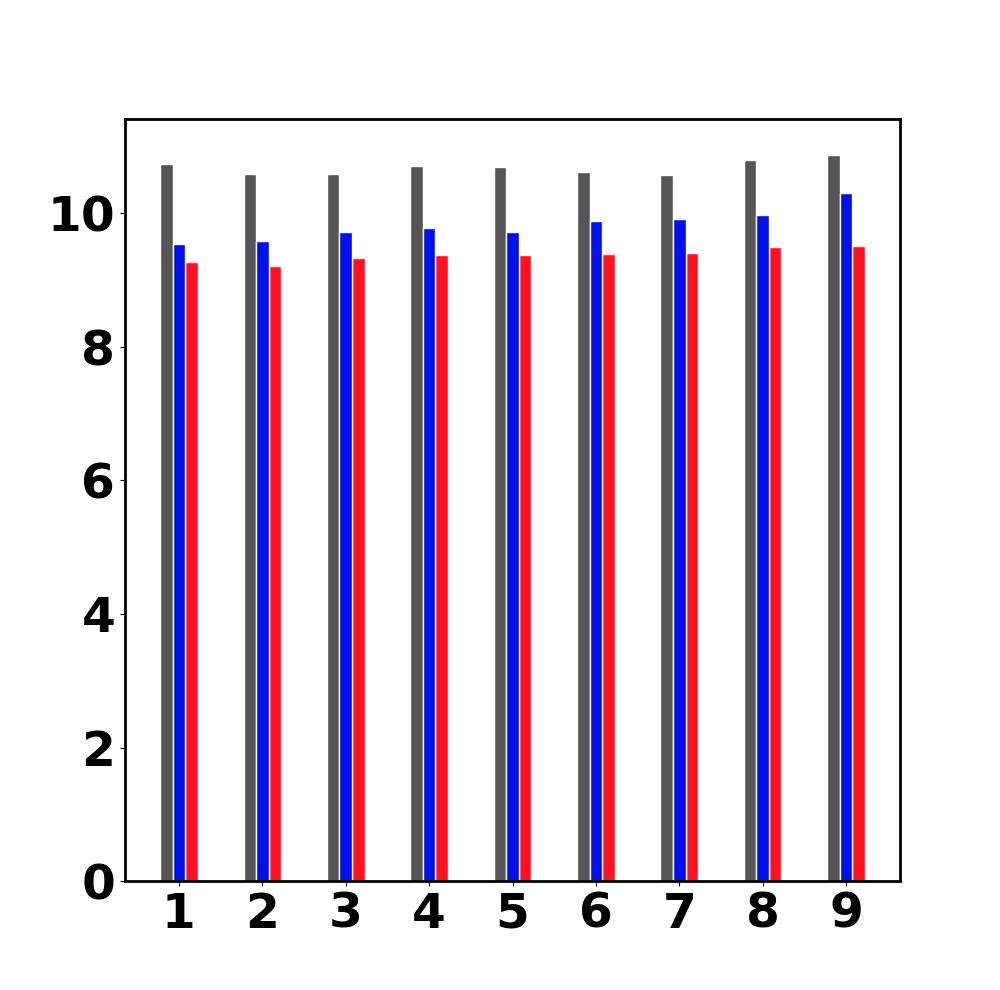}};
  \node[below of= img1, node distance=0cm, yshift=-2.0cm,font=\color{black}]  {\small  $100\sigma$ (veh./s/lane)};
  \node[left of= img1, node distance=0cm, rotate=90, anchor=center,yshift=2.0cm,font=\color{black}] {\small  Avg. TTC (s)};
\end{tikzpicture}
}
\caption*{(a)}
\resizebox{4.8cm}{4.8cm}{
\begin{tikzpicture}
  \node (img1)  {\includegraphics[scale=0.15]{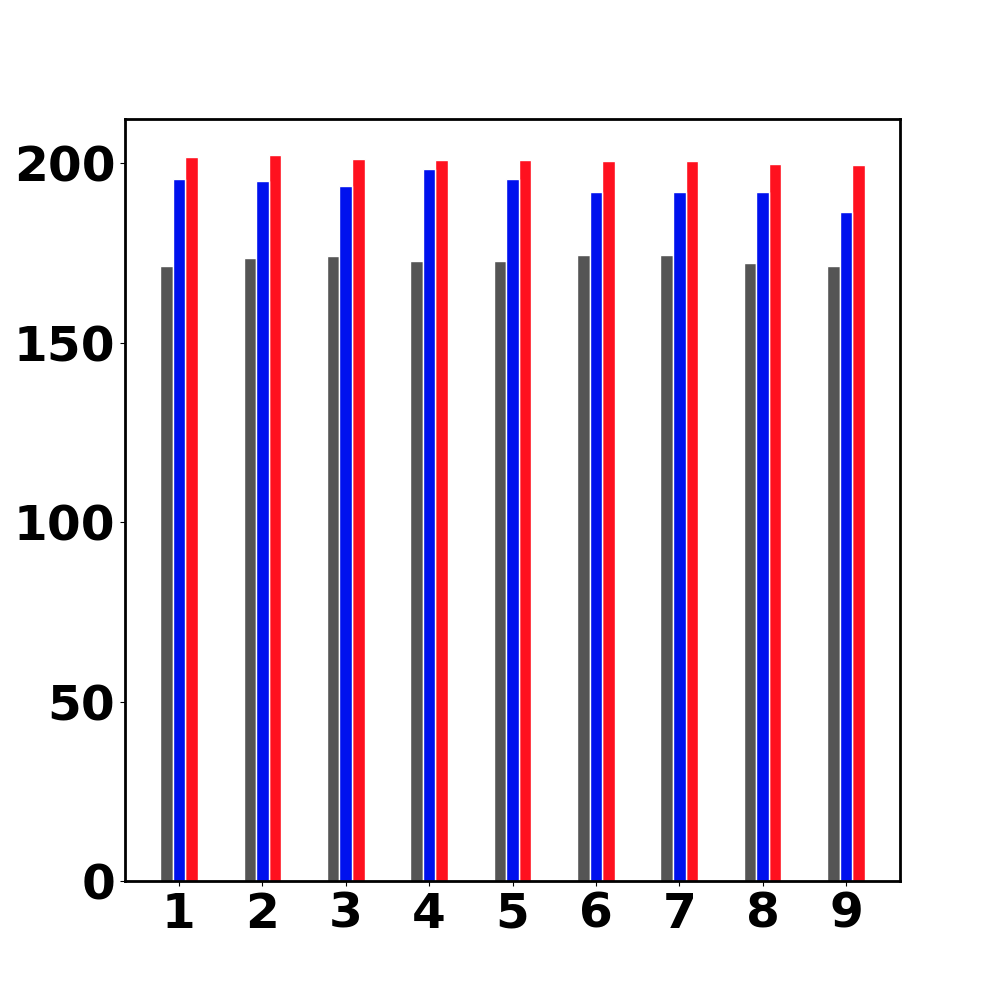}};
  \node[below of= img1, node distance=0cm, yshift=-2.0cm,font=\color{black}]  {\small  $100\sigma$ (veh./s/lane)};
  \node[left of= img1, node distance=0cm, rotate=90, anchor=center,yshift=2.1cm,font=\color{black}] {\small  Avg. objective value};
\end{tikzpicture}
}
\caption*{(b)}


\end{multicols}
\vspace*{-0.5cm}
\caption{Results of Comparison 4. Here, the weights on the velocity,
  acceleration and jerk terms are equal to 1.}
\label{fig:comp45}
\end{figure}



\subsubsection*{Computation time comparison}

We compare the computation time per vehicle for combined optimization
and DD-SWA to emphasize the computational advantage of DD-SWA. We
initially compare the size of $\vcoord$ for every round of trajectory
optimization for the coordinated phase. We inspect the variation in
$|\vcoord|$ using box plots. In the Figure~\ref{fig:comptime}, The
lower and the upper edges of the boxes represent the first quartile
($Q1$, $25^{th}$ percentile) and the third quartile \big($Q3$,
$75^{th}$ percentile) respectively. The whiskers above and below the
boxes represent the maximum and the minimum of the data. The maximum
is calculated as $Q3+1.5(Q3-Q1)$ and the minimum as
$Q1-1.5(Q3-Q1)$. Data beyond the maximum and the minimum are
considered as outliers and they are represented by small circles. The
mean of the data is represented by the bold black
line. Figures~\ref{fig:comptime}(a) and (b) show the box plots of
$|\vcoord|$ for combined optimization and DD-SWA respectively and
Figures~\ref{fig:comptime} (c) and (d) compare the computation time
per vehicle for combined optimization and DD-SWA for various arrival
rates. Although the trend of $|\vcoord|$ is similar for both the
algorithms, the trend of computation time per vehicle is significantly
different. The computation time for combined optimization increases
exponentially as $|\vcoord|$ increases with the arrival
rate. Figure~\ref{fig:comptime}(d) shows that DD-SWA has a nearly
constant value of computation time per vehicle. We attribute this to
the low computational effort required to determine the sequence of
intersection usage and for sequentially optimizing the trajectories of
the vehicles in $\vcoord$.

\begin{figure*}[!htb]
\begin{multicols}{4}
\centering

\resizebox{4.8cm}{4.8cm}{
\begin{tikzpicture}
  \node (img1)  {\includegraphics[scale=0.15]{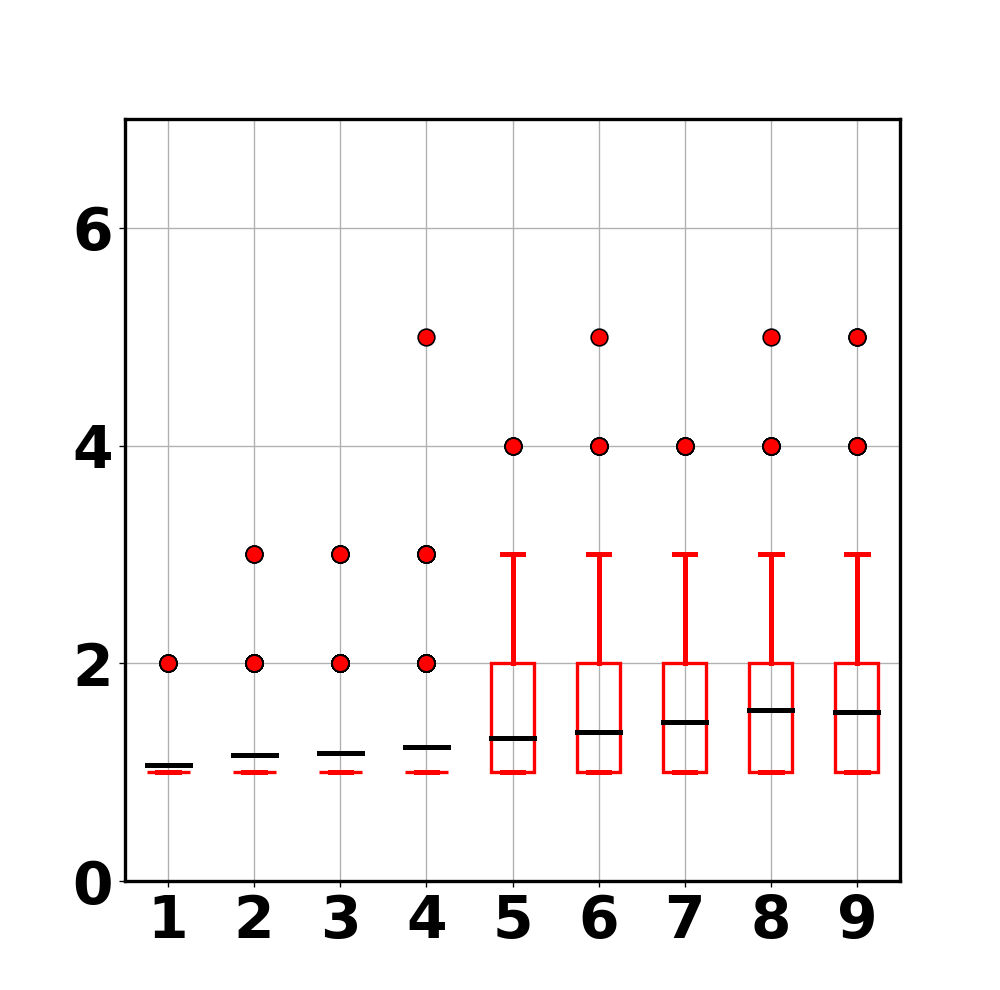}};
  \node[below of= img1, node distance=0cm, yshift=-2.0cm,font=\color{black}]  {\small  $100\sigma$ (veh./s/lane)};
  \node[left of= img1, node distance=0cm, rotate=90, anchor=center,yshift=1.8cm,font=\color{black}] {\small  $|\vcoord|$ (veh.)};
\end{tikzpicture}
}
\caption*{(a)}
\resizebox{4.8cm}{4.8cm}{
\begin{tikzpicture}
  \node (img1)  {\includegraphics[scale=0.16]{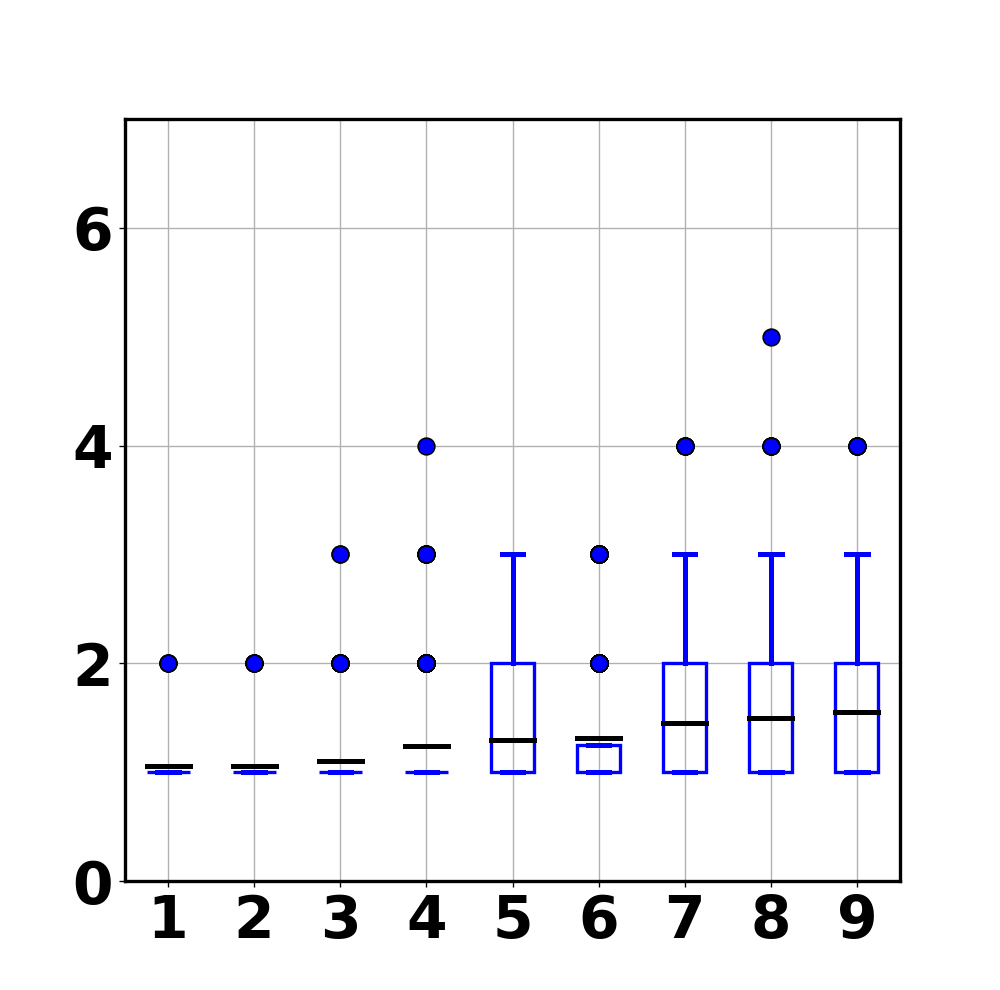}};
  \node[below of= img1, node distance=0cm, yshift=-2.0cm,font=\color{black}]  {\small  $100\sigma$ (veh./s/lane)};
  \node[left of= img1, node distance=0cm, rotate=90, anchor=center,yshift=1.8cm,font=\color{black}] {\small  $|\vcoord|$ (veh.)};
\end{tikzpicture}
}
\caption*{(b)}

\resizebox{4.8cm}{4.8cm}{
\begin{tikzpicture}
  \node (img1)  {\includegraphics[scale=0.164]{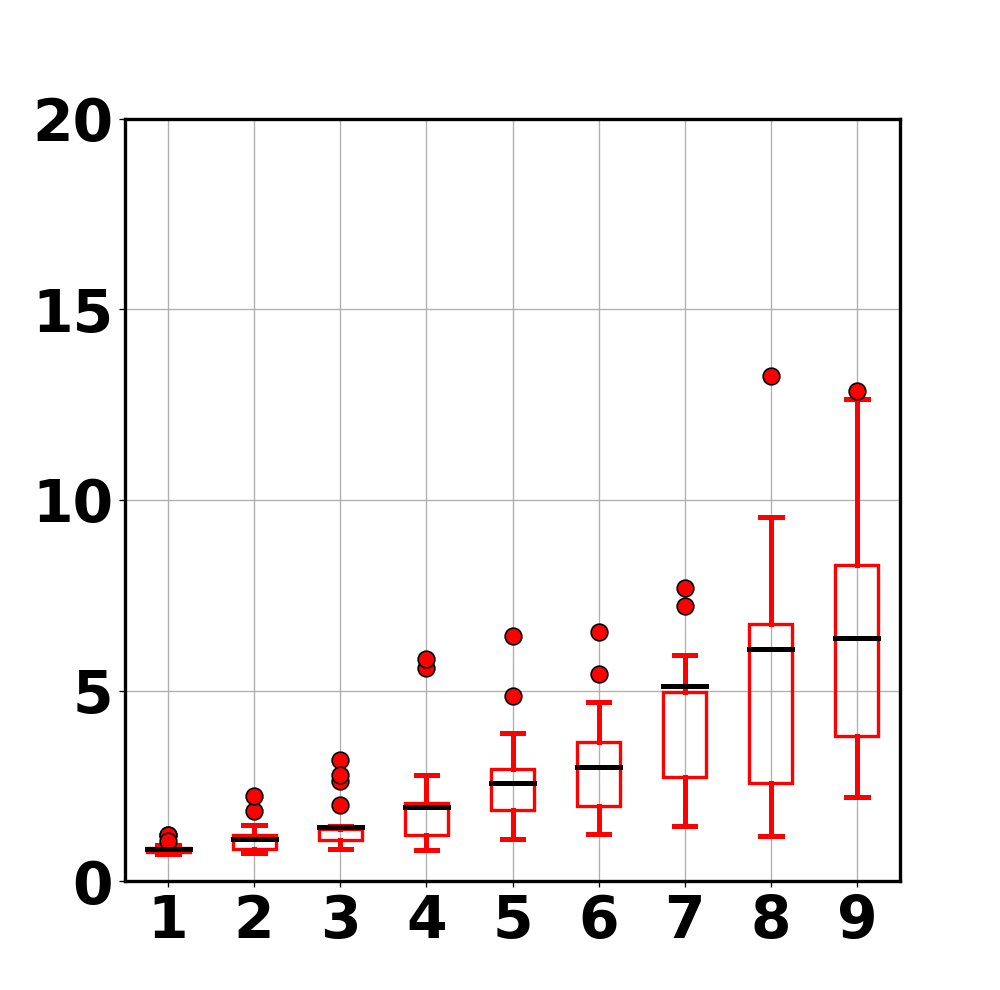}};
  \node[below of= img1, node distance=0cm, yshift=-2.0cm,font=\color{black}]  {\small  $100\sigma$ (veh./s/lane)};
  \node[left of= img1, node distance=0cm, rotate=90, anchor=center,yshift=1.9cm,font=\color{black}] {\small Comp. time/vehicle (s)};
\end{tikzpicture}
}
\caption*{(c)}
\resizebox{4.8cm}{4.8cm}{
\begin{tikzpicture}
  \node (img1)  {\includegraphics[scale=0.164]{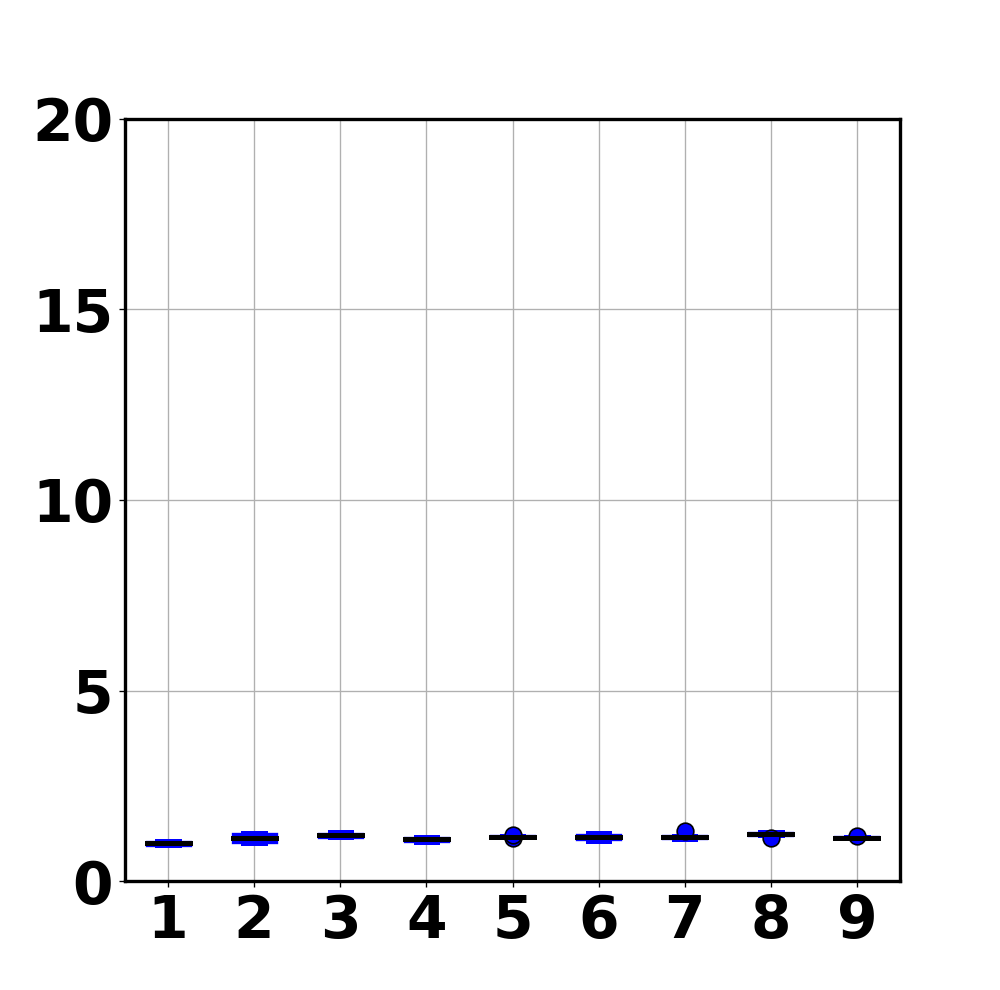}};
  \node[below of= img1, node distance=0cm, yshift=-2.0cm,font=\color{black}]  {\small $100\sigma$ (veh./s/lane)};
  \node[left of= img1, node distance=0cm, rotate=90, anchor=center,yshift=1.9cm,font=\color{black}] {\small  Comp. time/vehicle (s)};
\end{tikzpicture}
}
\caption*{(d)}
\end{multicols}
\caption{Results of the computation time comparison. Figures (a) and (c) correspond to combined optimization and (b) and (d) correspond to DD-SWA. 
In (a) and (b), the box plots 
represent the number of vehicles that participate in the trajectory optimization problem for the coordinated phase, which is denoted by $|\vcoord|$. In (c) and (d), computation time per vehicle is compared for combined optimization and DD-SWA. The lower and upper edge of the boxes represent the first and third quartile of the data respectively. The minimum and maximum of the data is represented by whiskers beyond the edges of the boxes. The outliers of the data are represented by circles beyond the whiskers. }
\label{fig:comptime}
\end{figure*}

\subsubsection*{Saturation of arrival rate}

In Comparison 2, at high arrival rates of traffic, the true arrival
rate of the vehicles is potentially reduced due to the feasibility
conditions mentioned in Theorem \ref{thm:feasiblity}. As mentioned
earlier, if the rear-end safety constraint is violated at the time of
arrival of a vehicle, the arrival time is delayed until the constraint
is satisfied. At high arrival rates, the rear-end safety constraint is
violated often. If necessary, the actual arrival of the vehicle is
delayed until the constraint is satisfied. In
Figure~\ref{fig:truesigma}, we present this idea by plotting the true
arrival rate per lane versus the set arrival rate per lane for DD-SWA.
We make use of the box plot to capture the variation of the true
arrival rate on the y-axis. This plot illustrates that the true
arrival rate saturates beyond $0.4$ vehicles/s per lane.  In Figure
\ref{fig:comp123}(b), the average TTC of vehicles in DD-SWA also
saturates at $0.4$ vehicles/s per lane which is consistent with the
saturation of the true arrival rate.


\begin{figure}[!htb]
\begin{tikzpicture}
  \node (img1) {\includegraphics[scale=0.18]{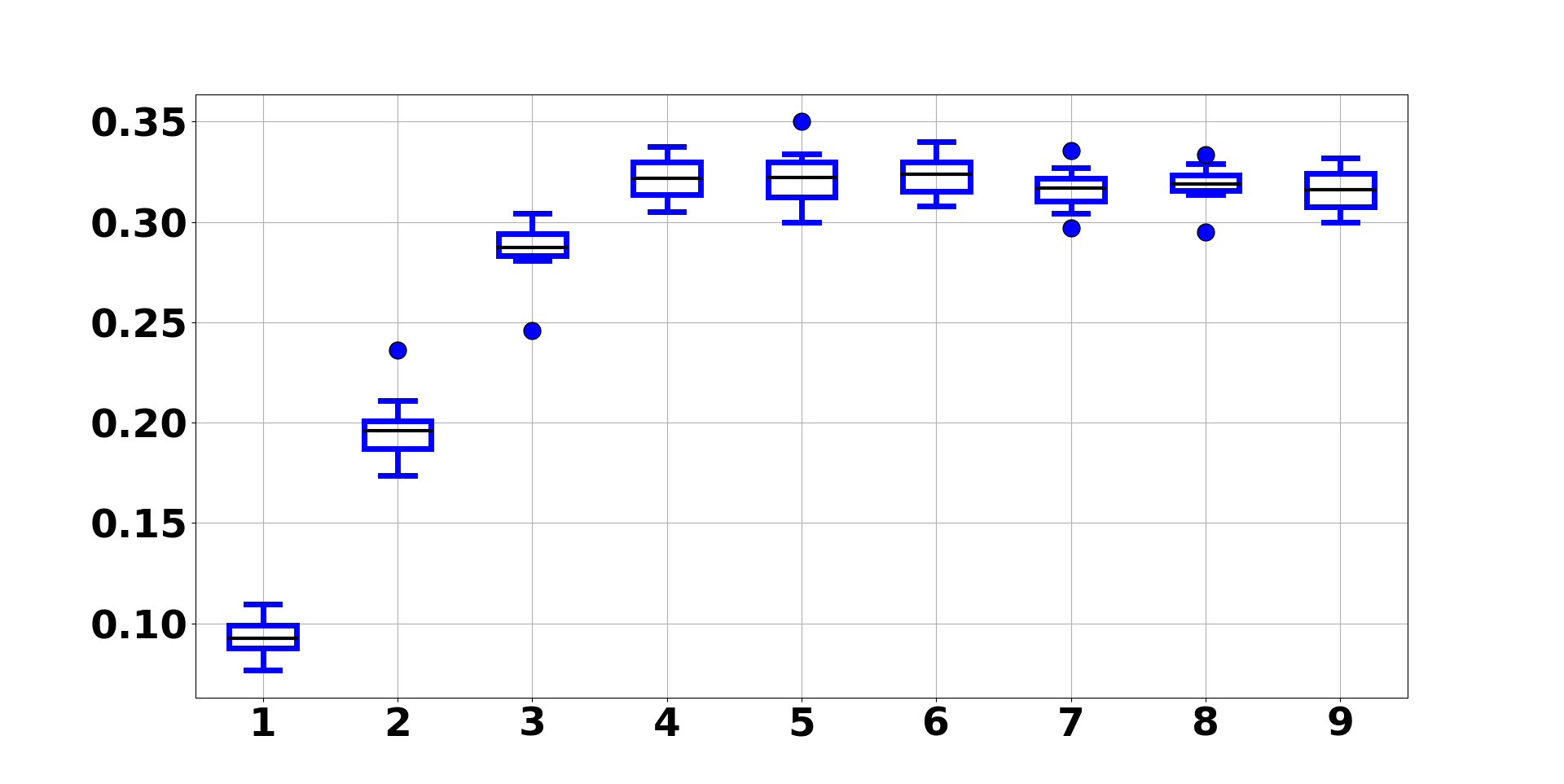}};
  \node[below of= img1, node distance=0cm, yshift=-2.3cm,font=\color{black}] { $10\sigma$ (veh./s/lane)};
  \node[left of= img1, node distance=0cm, rotate=90, anchor=center,yshift=4.25cm,font=\color{black}] { True $\sigma$ (veh./s/lane)};
  \label{fig:truearr} 
\end{tikzpicture}
\caption{The true arrival rate versus the desired $\sigma$ recorded
  for DD-SWA in Comparison 2. The upper and lower edges of the boxes
  represent the third and first quartile respectively. The whiskers of
  the boxes represent the maximum and minimum of the data.  The
  circles beyound the whiskers are the outliers.}
\label{fig:truesigma}
\end{figure}

\section{Conclusion}
In this work, we introduced a provably safe data-driven algorithm for
intersection management. By decomposing into two phases, we ensured
system wide safety and feasibility of vehicle
trajectories. Simulations suggest that the proposed algorithm performs
significantly better than traditional methods such as signalized
intersections and first-in first-out algorithms. We also demonstrated
through simulations that DD-SWA takes significantly less computational
effort compared to the centralized implementation with only marginal
loss in the objective value. Future work can be focused on developing
learning-based methodologies to automate the tuning of the parameters
in DD-SWA for various traffic scenarios. 
Other promising directions include extension to traffic management for
a network of intersections and hardware implementation on multi-robot
systems in regulated environments.

\bibliographystyle{IEEEtran}
\bibliography{../refer}


\end{document}